\documentclass[english]{amsart}

\usepackage{esint}
\usepackage[svgnames]{xcolor} 
\usepackage{pgf,tikz}

\usepackage{dsfont}
\usepackage{url}
\usepackage[utf8]{inputenc}
\usepackage[T1]{fontenc}
\usepackage{lmodern}
\usepackage{babel}
\usepackage{mathtools}  
\usepackage{amssymb}
\usepackage{lipsum}
\usepackage{mathrsfs}
\usepackage{color}
\usepackage[skip=2.2pt plus 1pt, indent=12pt]{parskip}
\usepackage{stmaryrd}
\usepackage{soul}
\usepackage{ulem}

\usepackage{amsmath, amsthm, amssymb, mathrsfs}

\usepackage{booktabs}
\usepackage{caption}
\usepackage{microtype}

\usepackage{graphicx}
\newtheorem{definition}{Definition}[section] % 从subsection层次开始排序,例如1.1.1,显示为“定义 1.1.1”
\newtheorem{theorem}{Theorem}[section] % 从section层次开始排序,例如1.1
\newtheorem{lemma}[theorem]{Lemma} % 与theorem共享编号,例如现在是theorem的编号为1.1,下一个lemma的编号就是1.2
\newtheorem{proposition}[theorem]{Proposition} % 与theorem共享编号
\theoremstyle{remark}
\newtheorem{remark}[theorem]{Remark}

\usepackage{url}
\usepackage{hyperref} % Elsevier模板通常兼容hyperref
\usepackage{color}
\title{On Generalized Barron Spaces for Shallow Neural Networks} %% Article title

\author{Shuai Lu and Leqi Zhang} %% Author name
\address{School of Mathematical Sciences, Fudan University, 220 Handan Road, Shanghai 200433, China. Corresponding author: S. Lu}
\email{slu@fudan.edu.cn (S. Lu); 23110180058@m.fudan.edu.cn (L. Zhang) }

\begin{document}

\begin{abstract}
%% Text of abstract
Classical Barron spaces are function spaces specifically designed for shallow neural networks mostly with ReLU, $\mathrm{ReLU}^k$ (RePU) or Lipschitz continuous activation functions. In the present work, we introduce a generalized Barron space \( B_{\sigma}^{\varphi} \) for shallow neural networks with a generic activation function possessing certain smoothness properties. The subscript \( \sigma \) denotes the activation function, while the superscript \( \varphi \) controls the smoothness of the generalized Barron spaces, defined via a \( \varphi \)-weighted integral norm imposed on neural network parameter measures.  
Under certain assumptions on \( \varphi \) and \( \sigma \), we show that \( B_{\sigma}^{\varphi} \) can be continuously embedded into Sobolev spaces. We also explore the relationships among various function spaces for shallow neural networks, demonstrating that our definition encompasses most conventional ones.  
As applications of the proposed generalized Barron spaces, we derive approximation rates within these spaces and establish error bounds for numerical differentiation with regularization penalized by the newly introduced generalized Barron norm. Numerical examples confirm that the proposed spaces allow the construction of neural networks with varying degrees of smoothness while using the same activation function.
\end{abstract}

%%%Graphical abstract
%\begin{graphicalabstract}
%%\includegraphics{grabs}
%\end{graphicalabstract}

%%%Research highlights
%\begin{highlights}
%\item Research highlight 1
%\item Research highlight 2
%\end{highlights}

%% Keywords
\keywords{Barron Spaces, Embedding Theorems, Neural Networks, Regularization Method}

%% Add \usepackage{lineno} before \begin{document} and uncomment 
%% following line to enable line numbers
%% \linenumbers

\maketitle

%% Use \section commands to start a section
\section{Introduction}
The study of shallow neural networks dates back several decades, originating in the 1940s. In early contributions, \cite{MP_model} introduced the M-P neuron model, which provided the first mathematical simulation of biological neurons and established an early theoretical foundation for neural computation. Subsequently, \cite{perceptron} developed the perceptron, which is a learnable single-layer neural network capable of linear classification, and ignited the first wave of enthusiasm in artificial intelligence. Since then, the term shallow neural network has primarily referred to architectures such as the single-layer perceptron and related early models. The significance of these networks lies in their role in promoting efficient approaches to AI, establishing core components such as neurons and weight learning, and offering valuable insights that shaped the subsequent development of neural networks and high-dimensional numerical simulation \cite{MR4420580, Bach2017, Cybenko89, DeVore2021, Hornik91, deep_learning, GKNV2022, analytic_app, Yarotsky2017, CNN_app}.
Mathematically, a shallow neural network can be represented as
\begin{align}\label{eq_shallowNN}
f_n(x) = \frac{1}{n}\sum_{i=1}^{n} a_i \sigma(w_i \cdot x + b_i),
\end{align}
where \( n \) denotes the number of neurons; \( a_i \) and \( w_i \) are the outer and inner layer weights, respectively; and \( b_i \) is the bias in the inner layer. The function \( \sigma \) is a nonlinear activation function, which may take forms such as ReLU, $\mathrm{ReLU}^k$, or Sigmoid.
Recently, to advance the mathematical understanding of shallow neural networks, several function spaces have been investigated, including spectral Barron spaces \cite{Barron1993}, (extended) Barron spaces \cite{E_Barronspace, Li_derivativeapprox}, variational spaces \cite{variation_space}, and Radon-BV spaces \cite{RadonBV}. Within these frameworks and the classical Sobolev setting, quantitative universal approximation results have been established, primarily for activation functions such as ReLU, $\mathrm{ReLU}^k$, and sigmoid; see, for instance, \cite{abdeljawad2024weighted, Barron1993, BN03, E_Barronspace, jiao_Deep_2023, klusowski2018approximation, Li_derivativeapprox, Liao_SJMDS25, variation_space, siegel2020approximation, Xu_sharpbound, Xu20} and references therein.

%we aim to introduce a novel and generalized class of Barron spaces for shallow neural networks, unifying the concepts of spectral Barron spaces and (extended) Barron spaces. We refer to this unified framework as Barron-type spaces. 

In this work, we focus specifically on two of the aforementioned spaces: the spectral Barron space and the (extended) Barron space, collectively referred to as Barron-type spaces. These function spaces are designed for the analysis of shallow neural networks. The first concept originated from the seminal work of \cite{Barron1993}, which introduced a class of functions that can be approximated by a shallow neural network with \( n \) hidden units, for instance as in (\ref{eq_shallowNN}), to an accuracy of order \( O(n^{-1/2}) \), thereby avoiding the curse of dimensionality. This perspective has spurred extensive follow-up research on high-dimensional approximation using neural networks.  
We now provide a brief introduction to these function spaces. The space is introduced in \cite{Barron1993} within a semi-norm setting, and is now extended as the spectral Barron space in \cite{ChenLuSpectral2023, choulli2026, klusowski2018approximation, lu2026, Xu_sharpbound, Xu20}. In current manuscript, this space is denoted by \(\mathscr{B}^s\). For any \(s \ge 0\) and a bounded domain \(\Omega \subset \mathbb{R}^d\), it is defined as  
\[
\mathscr{B}^s := \bigl\{ f = f_e|_{\Omega} \in C_b(\Omega) \;\big|\; (1+|\xi|)^s \hat{f_e} \in L^1(\mathbb{R}^d) \bigr\},
\]
where \(C_b(\Omega)\) denotes the space of bounded continuous functions on \(\Omega\), and \(f_e\) is a bounded continuous extension of \(f\) from \(\Omega\) to \(\mathbb{R}^d\). 
By denoting the following value 
\[
\|f\|_{\mathscr{B}^s_{f_e}} := \int_{\mathbb{R}^d} (1+|\xi|)^s |\hat{f_e}(\xi)| \, \mathrm{d}\xi,
\]
the spectral Barron norm is defined by  
\[
\|f\|_{\mathscr{B}^s} := \inf_{f_e|_\Omega = f} \|f\|_{\mathscr{B}^s_{f_e}}.
\]

Building upon the foundational work in \cite{Barron1993}, subsequent studies on shallow neural networks have characterized Barron-type spaces using parameter norms. In modern contexts, it is common to analyze networks equipped with ReLU or $\mathrm{ReLU}^k$ activation functions, where model complexity is evaluated through the expected \(\ell_1\) norm of the parameters. However, this framework relies crucially on the property of positive homogeneity. For non-homogeneous activation functions, such as sigmoid, tanh, or GELU, a straightforward reliance on parameter norms may become degenerate and fail to yield a well-defined Banach space. We also briefly provide some necessary background material, which can be found in \cite{E_Barronspace, Li_derivativeapprox} and will serve as the starting point of our current work. In particular, we recall the flow-induced function spaces introduced in \cite{E_Barronspace}, referred to as Barron spaces, described as follows. For \(k=1\), assume that a function \( f \) admits the representation
\[
f(x)=\int_{\mathbb{R}^{d+1}} \mathrm{ReLU}^k(w\cdot x+b) \, \mathrm{d}\rho(w,b),
\]
with
\[
\Vert f\Vert_{B^k_{\rho}}:=\int_{\mathbb{R}^{d+1}} \mathrm{ReLU}^k\bigl(\Vert w\Vert_1+\vert b\vert\bigr) \, \mathrm{d}\lvert\rho\rvert < \infty,
\]
for some \( \rho \in\mathcal{M}(\mathbb{R}^{d+1}) \). All such functions \( f \) constitute a space denoted by \( B^k \), and the corresponding Barron norm is defined as
\[
\Vert f\Vert_{B^k} := \inf_{\rho \in \mathcal{G}_f} \Vert f\Vert_{B^k_{\rho}},
\]
in which $\mathcal{G}_f = \left\{ \rho |
 f(x)=\int_{\mathbb{R}^{d+1}} \mathrm{ReLU}^k(w\cdot x+b) \, \mathrm{d}\rho, \Vert f\Vert_{B^k_{\rho}}<\infty,  \rho \in\mathcal{M}(\mathbb{R}^{d+1}) \right\}$. For results concerning all finite integers \( k \geq 1 \), which lead to the definition of the extended Barron spaces, we refer to \cite{Li_derivativeapprox}.

The above definition of the (extended) Barron norm and (extended) Barron spaces relies heavily on the homogeneity of the \(\mathrm{ReLU}^k\) activation function.  
It is worth highlighting that \cite{E_Barronspace} also introduced a Barron norm applicable to neural networks with Lipschitz-type activation functions and we briefly recall below.  
Suppose there exists a Lipschitz continuous activation function \(\sigma\), and a function \(f\) admits the following integral representation:
\[
f(x)=\int_{\mathbb{R}^{d+1}} \sigma(w\cdot x+b) \, \mathrm{d}\rho(w,b),
\]
with  
\[
\Vert f\Vert_{B_{\sigma,\rho}}:=\int_{\mathbb{R}^{d+1}} \bigl(1+\Vert w\Vert_1+|b|\bigr) \, \mathrm{d}|\rho|<\infty
\]
for some \(\rho \in\mathcal{M}(\mathbb{R}^{d+1})\).  
Then all such functions \(f\) constitute a space denoted by \(B_{\sigma}\), and the corresponding Barron norm is defined as  
\[
\Vert f\Vert_{B_{\sigma}}:= \inf_{\rho \in \mathcal{G}_{\sigma,f}} \Vert f\Vert_{B_{\sigma,\rho}},
\]
in which $\mathcal{G}_{\sigma, f} = \left\{ \rho |
 f(x)=\int_{\mathbb{R}^{d+1}} \sigma(w\cdot x+b) \,\mathrm{d}\rho, \Vert f\Vert_{B_{\sigma,\rho}} < \infty , \rho \in\mathcal{M}(\mathbb{R}^{d+1}) \right\}$. This Barron space $B_{\sigma}$ has been further investigated in \cite{embeddingbarron}, particularly in terms of its embedding properties. Moreover, \cite{embeddingbarron} extends the analysis to activation functions with higher smoothness and discusses the resulting embedding characteristics.  In a related direction for spectral Barron spaces, \cite{Meng_spectrum} recently proposed a “Barron spectrum” space based on weighted Fourier norms and proved embeddings into Sobolev and Besov spaces.

%In \cite{embeddingbarron}, Heeringa et al. carefully discussed the embedding relations between the Barron spaces. However, such definition does not provide a consistent framework for the definition of the Barron space. Instead, it separately discusses the neural networks with $\mathrm{RePU}$ activation functions and those with Lipschitz activation functions, which has also become one of the motivations for us to define the generalized Barron space.
     
%The two types of Barron spaces mentioned above are the ones we discussed later on. Besides these, there are many other spaces related to them. From a functional-analytic perspective, Bartolucci et al. showed that the set of functions realized by shallow networks forms a reproducing-kernel Banach space (RKBS) \cite{RKBS}, with a norm given by the total variation of the parameter measure. \cite{Xu_sharpbound} proposed a variation space and established sharp approximation bounds for neural network approximation. These approaches provide unified language and guarantees for deep learning, but they often remain abstract.

In this work, inspired by \cite{E_Barronspace,embeddingbarron}, we introduce a generalized class of Barron spaces for shallow neural networks that unifies the aforementioned concepts of spectral Barron spaces and (extended) Barron spaces. More precisely, we propose a generalized Barron space in which the dependence on the activation function is made explicit. Our key idea is to associate each activation function \(\sigma\) with a norm function \(\varphi\), to be defined later, and define the corresponding Barron norm via a \(\varphi\)-weighted integral over the parameter measure. Specifically, we consider the following function space:
\begin{equation*}
    B_\sigma^\varphi:=\left\{ f(x)=\int_{\mathbb{R}^{d+1}}\sigma(w\cdot x+b)\,\mathrm{d}\rho(w,b) \;:\; \int_{\mathbb{R}^{d+1}} \varphi(\|w\|_1+|b|)\,\mathrm{d}|
    \rho| <\infty, \rho \in \mathcal{M}(\mathbb{R}^{d+1}) \right\},
\end{equation*}
equipped with the norm
\begin{equation*}
    \|f\|_{B_\sigma^\varphi}:=\inf_{\rho}\int_{\mathbb{R}^{d+1}} \varphi(\|w\|_1+|b|)\,\mathrm{d}|
    \rho|,
\end{equation*}
where the infimum is taken over all representing measures \(\rho\). In fact, classical Barron spaces coincide with this general formulation upon suitable choices of \(\sigma\) and \(\varphi\). As we shall demonstrate, the function \(\varphi\) governs the growth of the parameter norm required to control the behavior of \(\sigma\). This proposed framework extends beyond existing Reproducing Kernel Banach Space (RKBS) constructions by explicitly prescribing how the norm can be tailored to the choice of activation function.

Our main contributions are as follows. First, we introduce a new class of Barron spaces \(B_{\sigma}^{\varphi}\) defined via a norm function \(\varphi\), providing a broadly applicable normed framework. Second, we establish a Sobolev-type embedding theorem: for smooth activation functions \(\sigma\) and appropriately chosen \(\varphi\), every \(f\in B_{\sigma}^{\varphi}\) possesses bounded derivatives up to a certain order and thus belongs to suitable Sobolev spaces. The \(\varphi\)-weighted norm therefore makes explicit the connection between parameter complexity and classical regularity. Third, we derive embedding relations among generalized Barron spaces. These results both generalize and make precise the embeddings studied in \cite{embeddingbarron,Wu_embeddingBarron}, and they provide concrete comparison inequalities for norms associated with different pairs \((\sigma,\varphi)\). In particular, our \(\varphi\)-weighted perspective is consistent with results for spectral and extended Barron spaces. Finally, we analyze approximation rates for functions in \(B_{\sigma}^{\varphi}\), thereby recovering and extending previously known results. We also show that the \(\varphi\)-weighted representation cost can serve as an effective regularizer and derive corresponding Tikhonov error bounds for shallow neural networks.

The remainder of this paper is organized as follows. In Section \ref{se_generalBarron}, we introduce the generalized Barron space, discuss its connection with conventional Barron-type spaces, and investigate embedding properties of generalized Barron spaces. Section \ref{se_approximateandregularization} provides comprehensive quantitative approximation results and error bounds for Tikhonov regularization coupled with a penalty term associated with the generalized Barron norm. Numerical examples in Section \ref{se_numerics} illustrate the effectiveness and accuracy of the proposed framework. In particular, we demonstrate that by selecting different norm functions, one can design the properties of shallow neural networks even when the same activation functions are used. Finally, Section \ref{se_conclusion} summarizes the present work and presents prospects for scalable research. An appendix collects additional proofs that are omitted from the main text for brevity.

\section{Generalized Barron spaces and Sobolev embedding}\label{se_generalBarron}
In this section, we provide a detailed definition of the generalized Barron spaces and investigate their embedding properties. For simplicity, we restrict our attention in this paper to functions defined on the bounded domain \(\Omega := (0,1)^d \subset \mathbb{R}^d\).

\subsection{Generalized Barron spaces}\label{subse_generalBarron}
The generalized Barron space \( B_{\sigma}^{\varphi} \) is characterized by two essential components: the activation function \( \sigma \) used in shallow neural networks, and a norm function \( \varphi \) that induces a weighted norm capturing the regularity properties of the space. Throughout this paper, an activation function is assumed, at a minimum, to be a Borel-measurable map \(\sigma:\mathbb{R}\to\mathbb{C}\); in the real-valued setting, its range is restricted to \(\mathbb{R}\). This assumption ensures that \((w,b)\mapsto\sigma(w\cdot x+b)\) is measurable for every fixed \(x\in\Omega\), so the parameter integral defining a neural network is meaningful. Whenever continuity, differentiability, or Sobolev regularity of \(\sigma\) is required, the corresponding stronger assumption is stated explicitly. We now provide the precise definition of \(\varphi\).
\begin{definition}[Norm function]\label{def_indexfunction}
Given an activation function \(\sigma\) and a function \(\varphi:[0,\infty)\to(0,\infty)\), consider the following conditions:  
    \begin{enumerate}
        \item $\varphi$ is positive and non-decreasing. 
        \item There exists a constant \(C > 0\) such that  
   \[
   |\sigma(x)| \le C \varphi(|x|) \quad \text{for all } x \in \mathbb{R}. 
   \]
    \end{enumerate}
We then call \(\varphi\) a norm function for the activation function \(\sigma\).
\end{definition}
\begin{remark}
We observe that the above definition implies that the growth of the activation function \(\sigma\) is controlled by that of the norm function \(\varphi\). A detailed discussion of the choice of norm function, together with concrete examples, is provided below.
\end{remark}

Based on the definition of norm functions, we introduce the generalized Barron space as follows.
\begin{definition}[Generalized Barron space]    
Let \(\mathcal{M}(\mathbb{R}^{d+1};\mathbb{C})\) denote the space of finite complex Radon measures, equipped with the total variation norm; in the real-valued setting, complex measures may be replaced by finite signed Radon measures. Functions on \(\Omega\) are identified when they agree almost everywhere. Given an activation function \(\sigma\) and a norm function \(\varphi\), if a measurable function \(f:\Omega\to\mathbb{C}\) admits a representation of the form
\begin{equation}\label{eq_def_f}
f(x)=\int_{\mathbb{R}^{d+1}}{\sigma}(w\cdot x+b)\mathrm{d}\rho(w,b),
\qquad \text{for a.e. }x\in\Omega,
\end{equation}
    with
\begin{equation*}
    \Vert f\Vert_{B_{\sigma,\rho}^\varphi}:=\int_{\mathbb{R}^{d+1}} \varphi(\Vert w\Vert_1+|b|)\mathrm{d}|\rho|<\infty,
\end{equation*}
for some measure \(\rho\) belonging to
$$\mathcal{M}^{\varphi}(\mathbb{R}^{d+1}):=\left\{\rho\in\mathcal{M}(\mathbb{R}^{d+1};\mathbb{C}):\|\rho\|_{\mathcal M^\varphi}:=\int_{\mathbb{R}^{d+1}} \varphi(\|w\|_1+|b|) \mathrm{d}|\rho|<\infty\right\},$$
then all such functions constitute the generalized Barron space \(B_{\sigma}^{\varphi}(\Omega)\). When only real-valued functions and activations are considered, the representing measures are taken to be finite signed Radon measures.  
The corresponding generalized Barron norm on \(B_{\sigma}^{\varphi}(\Omega)\) is defined as  
\begin{equation}\label{eq_defbarron}
    \Vert f\Vert_{B_{\sigma}^\varphi(\Omega)} := \inf_{\rho \in N_{\sigma}^{-1}(f)}\int_{\mathbb{R}^{d+1}} \varphi(\Vert w\Vert_1+|b|)\mathrm{d}|\rho|,
\end{equation}
where \(N_{\sigma}\) denotes the mapping  
\begin{align*}
N_{\sigma}(\rho) := \int_{\mathbb{R}^{d+1}}\sigma(w\cdot x+b)\mathrm{d}\rho.
\end{align*}
When the domain is fixed, we abbreviate \(B_{\sigma}^{\varphi}(\Omega)\) and its norm as \(B_{\sigma}^{\varphi}\) and \(\|\cdot\|_{B_{\sigma}^{\varphi}}\), respectively.
\end{definition}

With the above definition of a generalized Barron space, we establish below that $\Vert \cdot\Vert_{B_{\sigma}^{\varphi}}$ in (\ref{eq_defbarron}) is indeed a norm and that $B_{\sigma}^{\varphi}$ is continuously embedded into $L^{\infty}$.
\begin{proposition}
Given an activation function \(\sigma\) and an associated norm function \(\varphi\), the space \(B_{\sigma}^{\varphi}(\Omega)\), equipped with the norm \(\|\cdot\|_{B_{\sigma}^{\varphi}(\Omega)}\), is a Banach space. Moreover, the continuous embedding \(B_{\sigma}^{\varphi}(\Omega) \hookrightarrow L^{\infty}(\Omega)\) holds; i.e., there exists a constant \(C>0\) such that
\begin{equation}\label{embed_Linfty}
\|f\|_{L^{\infty}(\Omega)} \leq C \|f\|_{B_{\sigma}^{\varphi}(\Omega)} \quad \text{for all } f \in B_{\sigma}^{\varphi}(\Omega).
\end{equation}
\end{proposition}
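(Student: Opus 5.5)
The plan is to establish the $L^\infty$ bound first, use it to show that $\|\cdot\|_{B_\sigma^\varphi}$ is a genuine norm, and then prove completeness with an absolutely-convergent-series argument.

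\textbf{The $L^\infty$ bound.} For $x\in\Omega=(0,1)^d$ we have $|w\cdot x+b|\le \|w\|_\infty\|x\|_1\cdots$; more simply, $|w\cdot x+b|\le \|w\|_1\|x\|_\infty+|b|\le \|w\|_1+|b|$. Since $\varphi$ is non-decreasing and $|\sigma|\le C\varphi(|\cdot|)$, this gives
\[
|\sigma(w\cdot x+b)|\le C\varphi(|w\cdot x+b|)\le C\varphi(\|w\|_1+|b|).
\]
Hence for any representing $\rho$ and a.e.\ $x$,
\[
|f(x)|\le \int |\sigma(w\cdot x+b)|\,\mathrm d|\rho|\le C\|\rho\|_{\mathcal M^\varphi}.
\]
The integral defining $f$ is absolutely convergent, and taking the infimum over $\rho\in N_\sigma^{-1}(f)$ yields \eqref{embed_Linfty}.

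\textbf{Norm properties.} The map $N_\sigma$ is linear on $\mathcal M^\varphi$: the integrals converge absolutely by the bound above, and equality a.e.\ is preserved under sums. Homogeneity follows from $N_\sigma(\lambda\rho)=\lambda N_\sigma(\rho)$ and $|\lambda\rho|=|\lambda||\rho|$. For the triangle inequality, take near-optimal $\rho_1,\rho_2$ and note that $\rho_1+\rho_2$ represents $f+g$ with $|\rho_1+\rho_2|\le|\rho_1|+|\rho_2|$. Definiteness comes from the embedding: $\|f\|_{B_\sigma^\varphi}=0$ forces $\|f\|_{L^\infty}=0$, so $f=0$ a.e., which is the zero element under the paper's identification.

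\textbf{Completeness.} I would identify $B_\sigma^\varphi$ with a quotient. The space $\mathcal M^\varphi$ with norm $\|\cdot\|_{\mathcal M^\varphi}$ is isometric to the finite measures via $\rho\mapsto\varphi(\|w\|_1+|b|)\rho$, since $\varphi>0$ and the inverse multiplies by $1/\varphi$; so $\mathcal M^\varphi$ is a Banach space. The operator $N_\sigma:\mathcal M^\varphi\to L^\infty(\Omega)$ is bounded by the estimate above. Consequently $\ker N_\sigma$ is closed and $B_\sigma^\varphi=\mathcal M^\varphi/\ker N_\sigma$ isometrically, because the $B_\sigma^\varphi$ norm is exactly the quotient norm. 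A quotient of a Banach space by a closed subspace is complete.

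If I instead argue directly, I take a Cauchy sequence and pass to a subsequence with $\|f_{k+1}-f_k\|_{B_\sigma^\varphi}<2^{-k}$. I choose $\rho_k$ representing $f_{k+1}-f_k$ with $\|\rho_k\|_{\mathcal M^\varphi}<2^{-k}$, and sum $\rho=\rho_0+\sum_k\rho_k$, which converges in $\mathcal M^\varphi$. Dominated convergence, justified by the bound $C\varphi(\|w\|_1+|b|)$ on the integrand, gives $N_\sigma\rho=\lim f_k$ a.e. The tail estimates then give convergence in norm.

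\textbf{Main obstacle.} The delicate points are measure-theoretic. One is checking that $\mathcal M^\varphi$ is complete; the weighted-isometry trick handles this, using that $\varphi$ is Borel because it is monotone. The other is that representation holds only a.e.\ in $x$: a countable union of null sets is null, so the a.e.\ identities for the countably many $\rho_k$ combine without trouble.
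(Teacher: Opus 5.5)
Your proposal is correct and follows essentially the same route as the paper: the pointwise bound $|\sigma(w\cdot x+b)|\le C\varphi(\|w\|_1+|b|)$ on $\Omega$ gives the $L^\infty$ embedding and hence definiteness, and completeness comes from identifying $B_\sigma^\varphi$ isometrically with $\mathcal{M}^\varphi/\ker N_\sigma$, where $\mathcal{M}^\varphi$ is Banach via the weighted isometry $\rho\mapsto\varphi(\|w\|_1+|b|)\rho$. Your alternative absolutely-convergent-series argument is an unpacked version of the same quotient-completeness fact; there, boundedness of $N_\sigma$ into $L^\infty$ already gives uniform convergence of the partial sums, so dominated convergence is not even needed.
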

    \begin{proof}
We first show that \(\|\cdot\|_{B_{\sigma}^{\varphi}}\) defines a norm on \(B_{\sigma}^{\varphi}(\Omega)\).  
Let \(f_1, f_2 \in B_{\sigma}^{\varphi}(\Omega)\) take the form of (\ref{eq_def_f}). By definition of the norm, for any \(\varepsilon > 0\) there exist measures \(\rho_i \in \mathcal{M}^{\varphi}(\mathbb{R}^{d+1})\) (\(i = 1,2\)) such that  
\[
f_i(x) = \int_{\mathbb{R}^{d+1}} \sigma(w \cdot x + b) \, d\rho_i \quad \text{and} \quad 
\|f_i\|_{B_{\sigma,\rho_i}^{\varphi}} \le \|f_i\|_{B_{\sigma}^{\varphi}} + \varepsilon,
\]  
where \(\|f_i\|_{B_{\sigma,\rho_i}^{\varphi}} := \int_{\mathbb{R}^{d+1}} \varphi(\|w\|_1 + |b|) \, d|\rho_i|\).  
Then  
\[
f_1 + f_2 = \int_{\mathbb{R}^{d+1}} \sigma(w \cdot x + b) \, d(\rho_1 + \rho_2),
\]  
and consequently  
\[
\begin{aligned}
\|f_1 + f_2\|_{B_{\sigma}^{\varphi}} 
&\le \|f_1 + f_2\|_{B_{\sigma,\rho_1+\rho_2}^{\varphi}} \\
&= \int_{\mathbb{R}^{d+1}} \varphi(\|w\|_1 + |b|) \, d|\rho_1 + \rho_2| \\
&\le \int_{\mathbb{R}^{d+1}} \varphi(\|w\|_1 + |b|) \, d(|\rho_1| + |\rho_2|) \\
&= \|f_1\|_{B_{\sigma,\rho_1}^{\varphi}} + \|f_2\|_{B_{\sigma,\rho_2}^{\varphi}} \\
&\le \|f_1\|_{B_{\sigma}^{\varphi}} + \|f_2\|_{B_{\sigma}^{\varphi}} + 2\varepsilon.
\end{aligned}
\]  
Since \(\varepsilon > 0\) is arbitrary, the triangle inequality follows. Absolute homogeneity is straightforward to verify.  

It remains to prove the continuous embedding \(\eqref{embed_Linfty}\); this will also establish positive definiteness.  
For any \(f \in B_{\sigma}^{\varphi}(\Omega)\) and any representation  
\[
f(x) = \int_{\mathbb{R}^{d+1}} \sigma(w \cdot x + b) \, d\rho
\]  
with \(\rho \in \mathcal{M}^{\varphi}(\mathbb{R}^{d+1})\), we have, for every \(x \in \Omega\) for which the representation holds,
\[
\begin{aligned}
|f(x)| 
&\le \int_{\mathbb{R}^{d+1}} |\sigma(w \cdot x + b)| \, d|\rho| \\
&\le C \int_{\mathbb{R}^{d+1}} \varphi(|w \cdot x + b|) \, d|\rho| \quad \text{(by the growth condition } |\sigma(t)| \le C\varphi(|t|)\text{)}.
\end{aligned}
\]  
Because \(\varphi\) is non‑decreasing and \(|w \cdot x + b| \le \|w\|_1 \|x\|_\infty + |b|\), and since \(\|x\|_\infty \le 1\) for \(x \in \Omega\), we obtain  
\[
\varphi(|w \cdot x + b|) \le \varphi(\|w\|_1 + |b|).
\]  
Thus  
\[
|f(x)| \le C \int_{\mathbb{R}^{d+1}} \varphi(\|w\|_1 + |b|) \, d|\rho| = C \|f\|_{B_{\sigma,\rho}^{\varphi}}.
\]  
The right‑hand side is independent of \(x\), so  
\[
\|f\|_{L^\infty(\Omega)} \le C \|f\|_{B_{\sigma,\rho}^{\varphi}}.
\]  
Taking the infimum over all representations \(\rho\) of \(f\) yields  
\[
\|f\|_{L^\infty(\Omega)} \le C \|f\|_{B_{\sigma}^{\varphi}},
\]  
which is exactly \(\eqref{embed_Linfty}\). In particular, if \(\|f\|_{B_{\sigma}^{\varphi}} = 0\) then \(f = 0\) almost everywhere, establishing positive definiteness.  
Hence \(\|\cdot\|_{B_{\sigma}^{\varphi}}\) is a norm on \(B_{\sigma}^{\varphi}(\Omega)\).    

We now prove the completeness of \(B_{\sigma}^{\varphi}\). By \eqref{embed_Linfty}, the linear map  
\[
N_{\sigma} : \mathcal{M}^{\varphi}(\mathbb{R}^{d+1}) \to L^{\infty}(\Omega),\quad 
\rho \mapsto \int_{\mathbb{R}^{d+1}} \sigma(w\cdot x+b) \,d\rho(w,b)
\]  
is continuous; hence its kernel \(\ker(N_{\sigma})\) is a closed subspace. By construction, \(B_{\sigma}^{\varphi}\) is isometrically isomorphic to the quotient space  
\[
\mathcal{M}^{\varphi}(\mathbb{R}^{d+1}) / \ker(N_{\sigma}).
\]  
The space \(\mathcal{M}^{\varphi}(\mathbb{R}^{d+1})\) is Banach under \(\|\cdot\|_{\mathcal M^\varphi}\). Indeed, since \(\varphi\ge\varphi(0)>0\), the map
\[
\rho\longmapsto \varphi(\|w\|_1+|b|)\rho
\]
is an isometric isomorphism from \(\mathcal M^\varphi\) onto the Banach space of finite complex Radon measures with the total variation norm. Since \(\ker(N_{\sigma})\) is closed, the quotient is also a Banach space. Consequently, \(B_{\sigma}^{\varphi}\) is complete.
\end{proof}

If the activation function $\sigma$ possesses a higher degree of smoothness, then the generalized Barron space $B_{\sigma}^{\varphi}$ can be shown to have the Sobolev embedding property. This property plays a crucial role in the subsequent discussion on the applications of these spaces.
    
\begin{theorem}[Sobolev embedding of generalized Barron spaces]\label{sobolevembed}
Let \(\sigma \in W^{k,1}_{\mathrm{loc}}\) be an activation function and \(\varphi\) a norm function. Define \(\varphi_{m}(x) := \varphi(x) / \bigl(1 + \mathrm{ReLU}(x) \bigr)^{m}\), in which $\mathrm{ReLU}(x) := \max(0,x)$. Assume that  
\begin{equation}\label{cond1}
|\partial^{m}_{x} \sigma(x)| \le C \varphi_{m}(|x|), \quad  m\le k,
\end{equation}
and that \(\varphi_k\) is non‑decreasing and positive. Then for every \(f \in B_{\sigma}^{\varphi}(\Omega)\) of the form  
\[
f(x) = \int_{\mathbb{R}^{d+1}} \sigma(w \cdot x + b) \, \mathrm{d}\rho(w, b),
\]  
it holds for any multi-index \(|\alpha| \leq k\) that  
\begin{equation}\label{de_rep}
\partial_{x}^{\alpha} f = \int_{\mathbb{R}^{d+1}} w^{\alpha} \partial_x^{|\alpha|} \sigma(w\cdot x+b) \, \mathrm{d}\rho.
\end{equation}
Moreover, there holds the embedding \(B_{\sigma}^{\varphi}(\Omega) \hookrightarrow W^{k,\infty}(\Omega)\); i.e.,  
\begin{align*}
\|\cdot\|_{W^{k,\infty}(\Omega)} \lesssim \|\cdot\|_{B_{\sigma}^{\varphi}(\Omega)}.
\end{align*}
\end{theorem}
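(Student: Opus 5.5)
The plan is to differentiate under the integral sign in the distributional sense, using the pointwise bound \eqref{cond1} together with the weight comparison
\[
|w^\alpha|\,\varphi_{|\alpha|}(|w\cdot x+b|)\le \varphi(\|w\|_1+|b|)\qquad (x\in\Omega).
\]
This bound is what makes every integrand dominated by the $\mathcal M^\varphi$-weight.

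\textbf{Step 1: the weight comparison.} Set $m=|\alpha|$ and $t:=\|w\|_1+|b|$. For $x\in\Omega$ we have $|w\cdot x+b|\le t$. Also $|w^\alpha|\le \|w\|_\infty^{m}\le t^m\le (1+t)^m$. We need $\varphi_m$ to be non-decreasing for every $m\le k$, whereas the hypothesis only states this for $\varphi_k$. This follows from
\[
\varphi_m=\varphi_k\,(1+\mathrm{ReLU})^{k-m},
\]
a product of non-negative non-decreasing functions. Hence $\varphi_m(|w\cdot x+b|)\le \varphi_m(t)$, and therefore
\[
|w^\alpha|\,|\partial^m\sigma(w\cdot x+b)|\le C(1+t)^m\varphi_m(t)=C\varphi(t).
\]
Integrating against $|\rho|$ shows that the right-hand side of \eqref{de_rep} defines a function $g_\alpha$ with $\|g_\alpha\|_{L^\infty(\Omega)}\le C\|\rho\|_{\mathcal M^\varphi}$. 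This is the same argument as in the proof of \eqref{embed_Linfty}.

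\textbf{Step 2: identify $g_\alpha$ as the weak derivative.} Here $\sigma\in W^{k,1}_{\mathrm{loc}}$, so pointwise values of $\partial^m\sigma$ are only defined almost everywhere. The cleanest route is to test against $\phi\in C_c^\infty(\Omega)$ and use Fubini. First, $\int_\Omega\int|\sigma(w\cdot x+b)||\phi(x)|\,d|\rho|\,dx<\infty$ by the bound of Step 1. Hence
\[
\int_\Omega f\,\partial^\alpha\phi\,dx=\int\Big(\int_\Omega \sigma(w\cdot x+b)\,\partial^\alpha\phi(x)\,dx\Big)d\rho .
\]
For fixed $(w,b)$ with $w\neq0$, the map $x\mapsto\sigma(w\cdot x+b)$ is a $W^{k,1}_{\mathrm{loc}}$ function composed with an affine submersion. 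Its weak derivatives are therefore $w^\alpha(\partial^{m}\sigma)(w\cdot x+b)$, by the chain rule for Sobolev functions under affine maps, which one can verify by mollifying $\sigma$. So the inner integral equals
\[
(-1)^m\int_\Omega w^\alpha\,\partial^m\sigma(w\cdot x+b)\,\phi\,dx .
\]
When $w=0$ both sides vanish for $m\ge1$. Swapping the integrals back, which is justified by Step 1, gives $\int f\partial^\alpha\phi=(-1)^m\int g_\alpha\phi$. Thus $\partial^\alpha f=g_\alpha$ a.e., which is \eqref{de_rep}.

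\textbf{Main obstacle and conclusion.} The delicate point is the measurability and Fubini justification for $(w,b,x)\mapsto\partial^m\sigma(w\cdot x+b)$. The issue is that $\partial^m\sigma$ is defined only a.e. on $\mathbb R$, while $\rho$ may charge sets where $w\cdot x+b$ lands in a null set for many $x$. I would handle this by fixing a Borel representative of $\partial^m\sigma$. For each fixed $(w,b)$ with $w\ne0$, the pushforward of Lebesgue measure on $\Omega$ under $x\mapsto w\cdot x+b$ is absolutely continuous, so changing the representative alters the inner $x$-integral by nothing. Joint Borel measurability of the composition then makes Tonelli applicable. Finally, summing the $L^\infty$ bounds over $|\alpha|\le k$ gives $\|f\|_{W^{k,\infty}}\lesssim\|\rho\|_{\mathcal M^\varphi}$, and taking the infimum over $\rho\in N_\sigma^{-1}(f)$ yields the embedding.
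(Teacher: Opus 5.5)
Your proposal is correct and follows essentially the same route as the paper's proof. You dominate $|w^\alpha\,\partial^{|\alpha|}\sigma(w\cdot x+b)|$ by $C\varphi(\|w\|_1+|b|)$, use Fubini to move the test function inside, integrate by parts for each fixed $(w,b)$, swap back, and finish with the $L^\infty$ bound and the infimum over representing measures. Your version is slightly more careful than the paper's: it justifies monotonicity of $\varphi_m$ for $m<k$ via $\varphi_m=\varphi_k(1+\mathrm{ReLU})^{k-m}$ (the paper invokes only that $\varphi_k$ is non-decreasing while actually using $\varphi_{|\alpha|}$), and it addresses the affine chain rule for $W^{k,1}_{\mathrm{loc}}$ activations, the case $w=0$, and the choice of Borel representative, all of which the paper passes over.
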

    \begin{proof}
        Assume that $f\in B_{\sigma}^{\varphi}$ and $f$ has the following form
        \begin{align*}
            f(x)=\int_{\mathbb{R}^{d+1}} \sigma(w\cdot x+b)\mathrm{d}\rho.
        \end{align*}
        By definition of weak derivative, we need to prove that 
        \begin{align*}
            \int_{\Omega} (-1)^{\vert \alpha \vert} f \partial_x^{\alpha}\psi \mathrm{d}x = \int_{\Omega}\left[\int_{\mathbb{R}^{d+1}} w^{\alpha} \partial_x^{|\alpha|}\sigma(w\cdot x+b)\mathrm{d}\rho\right] \psi\mathrm{d}x < \infty, \quad \forall \psi\in C_c^{\infty}(\Omega).
        \end{align*}
        Because $\eqref{cond1}$ holds, we derive that
        \begin{align*}
            \left|w^{\alpha} \partial_x^{|\alpha|}\sigma(w\cdot x+b) \psi(x)\right|&\le C(1+\Vert w\Vert_1)^{|\alpha|}\varphi_{|\alpha|}(|w\cdot x+b|)|\psi(x)|\\
            &\le C\varphi(\|w\|_1+|b|)|\psi(x)|,
        \end{align*}
         and the last inequality follows by the non-decreasing property of $\varphi_k$. Therefore,
        \begin{align*}
            \int_{\Omega}\int_{\mathbb{R}^{d+1}} \left|w^{\alpha} \partial_x^{|\alpha|}\sigma(w\cdot x+b)\psi\right|\mathrm{d}|\rho| \mathrm{d}x \le \int_{\Omega}\left[\int_{\mathbb{R}^{d+1}} C\varphi(\Vert w\Vert_1+|b|)\mathrm{d}|\rho| \right] |\psi|\mathrm{d}x < \infty,
        \end{align*}
        for any $\psi \in C_c^{\infty}(\Omega)$.
        By Fubini's theorem, we can derive
        \begin{align*}
            &\int_{\Omega}\left[\int_{\mathbb{R}^{d+1}} w^{\alpha} \partial_x^{|\alpha|}\sigma(w\cdot x+b)\mathrm{d}\rho\right] \psi\mathrm{d}x\\
            & \quad = \int_{\mathbb{R}^{d+1}}\left[\int_{\Omega} w^{\alpha} \partial_x^{|\alpha|}\sigma(w\cdot x+b)\psi\mathrm{d}x\right]\mathrm{d}\rho \\
            & \quad = \int_{\mathbb{R}^{d+1}}\left[\int_{\Omega} (-1)^{\vert \alpha \vert} \sigma(w\cdot x+b) \partial_x^{\alpha}\psi\mathrm{d}x\right]\mathrm{d}\rho\\
            & \quad = \int_{\Omega}\left[\int_{\mathbb{R}^{d+1}}  (-1)^{\vert \alpha \vert} \sigma(w\cdot x+b)  \mathrm{d}\rho\right]\partial_x^{\alpha}\psi\mathrm{d}x\\
            & \quad = \int_{\Omega} (-1)^{|\alpha|} f \partial_x^{\alpha}\psi \mathrm{d}x,
        \end{align*}
        which yields $\eqref{de_rep}$. Consequently we can obtain
        \begin{align*}
            |\partial_x^\alpha f(x)|&\le\int_{\mathbb{R}^{d+1}}|w^{\alpha} \partial_x^\alpha \sigma(w\cdot x+b)|\mathrm{d} |\rho|\\
            &\le C\int_{\mathbb{R}^{d+1}} (1+\|w\|_1)^{|\alpha|}\varphi(|w\cdot x+b|)/(1+|w\cdot x+b|)^{|\alpha|} \mathrm{d}|\rho|\\
            &\le C\int_{\mathbb{R}^{d+1}}\varphi(\|w\|_1+|b|)\mathrm{d}|\rho| =C\|f\|_{B_{\sigma,\rho}}^{\varphi},
        \end{align*}
        which completes the proof.
    \end{proof}

The above theorem implies that a faster-growing norm function \(\varphi\) leads to higher-order smoothness for the Barron space \(B_{\sigma}^{\varphi}\). However, Theorem 1 in \cite{embeddingbarron} shows that if the activation function satisfies \(\sigma \in C^{k}(\mathbb{R})\) and \(\partial^{k+1}\sigma \in L^{1}(\mathbb{R})\), then choosing \(\varphi(x)=1+\mathrm{ReLU}(x)\) yields the embedding \(B_{\sigma}^{1+\mathrm{ReLU}(x)} \hookrightarrow B^{k}\). Combined with the known embedding \(B^{k} \hookrightarrow W^{k,\infty}\) from \cite{Li_derivativeapprox}, it follows immediately that \(B_{\sigma}^{1+\mathrm{ReLU}(x)} \hookrightarrow W^{k,\infty}\). This observation naturally raises the question of whether the condition imposed in the above theorem is actually necessary, or whether one must consider \(B_{\sigma}^{1+\mathrm{ReLU}^{k}}\) to guarantee higher smoothness. The following remark demonstrates that the condition in the theorem is indeed necessary, and that the space \(B_{\sigma}^{1+\mathrm{ReLU}^{k}}\) plays a meaningful role in characterizing the smoothness of Barron spaces.

\begin{remark}
Recent research (e.g., \cite{Siren}) has demonstrated that oscillatory activation functions, such as the sine function, can be particularly well-suited for certain problems. Accordingly, we consider the corresponding generalized Barron space \(B_{\sin(x)}^{1+\mathrm{ReLU}(x)}\). For the activation function \(\sigma(x)=\sin(x)\), define  
\begin{align*}  
    f(x)=\sum_{i=1}^\infty 26^{-i}\pi^{-1}\sin(13^i\pi x).  
\end{align*}  
Then,  
\[  
\|f\|_{B_{\sin(x)}^{1+\mathrm{ReLU}(x)}} \le \sum_{i=1}^\infty 26^{-i}\pi^{-1}(1+13^i\pi) < \infty.  
\]  
However, differentiating term by term yields  
\begin{align*}  
    f'(x)=\sum_{i=1}^\infty 2^{-i}\cos(13^i\pi x),  
\end{align*}  
which is a classic example of a Weierstrass-type function. Consequently, we have $B_{\sin(x)}^{1+\mathrm{ReLU}(x)} \hookrightarrow W^{1,\infty}$, but $B_{\sin(x)}^{1+\mathrm{ReLU}(x)} \not\hookrightarrow W^{2,\infty}$, because the Weierstrass function lacks a weak derivative. This example illustrates the necessity of Theorem \ref{sobolevembed} and motivates the definition of $B_{\sigma}^\varphi$.

\end{remark}

\subsection{Relation with conventional Barron-type spaces}\label{subse_Barrontype}
Having defined the generalized Barron space, we now investigate its relationship with conventional Barron-type spaces. As demonstrated in \cite{embeddingbarron}, for neural networks with Lipschitz activation functions, the choice \(\varphi(x)=1+\mathrm{ReLU}(x)\) agrees with our definition. For networks using a \(\mathrm{ReLU}^s(x)\) activation, we may take \(\varphi(x)=1+\mathrm{ReLU}^t(x)\) with \(t\ge s\). This function satisfies Definition~\ref{def_indexfunction}. We show below that \(\Vert \cdot \Vert_{B^s} \simeq \Vert \cdot \Vert_{B_{\mathrm{ReLU}^s(x)}^{1+\mathrm{ReLU}^t(x)}}\); thus the generalized and extended Barron spaces coincide with equivalent norms.

\begin{proposition}\label{generalized_Barron}
With the choice of $\varphi=1+\mathrm{ReLU}^t(x)$ for $t\ge s$, there holds the identification $B_{\mathrm{ReLU}^s(x)}^{1+\mathrm{ReLU}^t(x)}=B^{s}$, and the following norm equivalence holds:
\begin{align*}
    \frac{1}{2} \| \cdot \|_{B_{\mathrm{ReLU}^s(x)}^{1+\mathrm{ReLU}^t(x)}} \le \| \cdot \|_{B^s} \le \| \cdot \|_{B_{\mathrm{ReLU}^s(x)}^{1+\mathrm{ReLU}^t(x)}}.
\end{align*}    
    \end{proposition}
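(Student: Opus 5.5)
The plan is to compare the two weights pointwise in one direction, and to use the positive homogeneity of $\mathrm{ReLU}^s$ to renormalize representing measures in the other. Write $r(w,b):=\|w\|_1+|b|$. Then the extended Barron weight is $\mathrm{ReLU}^s(r)=r^s$ and the generalized weight is $1+r^t$.

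\textbf{Step 1: the bound $\|f\|_{B^s}\le \|f\|_{B_{\mathrm{ReLU}^s(x)}^{1+\mathrm{ReLU}^t(x)}}$.} Both spaces use the same representation map $N_\sigma$ with $\sigma=\mathrm{ReLU}^s$. It therefore suffices to check the elementary inequality $r^s\le 1+r^t$ for all $r\ge0$ when $t\ge s$. If $r\le1$, then $r^s\le1$. If $r>1$, then $r^s\le r^t$. Hence every $\rho\in\mathcal M^{1+\mathrm{ReLU}^t}$ representing $f$ also represents $f$ in $B^s$, and
\[
\int r^s\,\mathrm d|\rho|\le\int (1+r^t)\,\mathrm d|\rho|.
\]
Taking the infimum over $\rho$ gives the claim and the inclusion $B_{\mathrm{ReLU}^s(x)}^{1+\mathrm{ReLU}^t(x)}\subset B^s$.

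\textbf{Step 2: the bound $\|f\|_{B_{\mathrm{ReLU}^s(x)}^{1+\mathrm{ReLU}^t(x)}}\le 2\|f\|_{B^s}$.} Take any $\rho$ representing $f$ with $\int r^s\,\mathrm d|\rho|<\infty$. I discard the atom at $(w,b)=0$, which changes nothing because $\mathrm{ReLU}^s(0)=0$ and it contributes zero weight in $B^s$ (for $s>0$). On $\{r>0\}$ consider the projection $T(w,b)=(w,b)/r(w,b)$ onto the unit $\ell_1$-sphere $S=\{r=1\}$, and define the pushforward
\[
\nu:=T_\#\bigl(r^s\rho|_{\{r>0\}}\bigr).
\]
This is a finite measure, since $\int r^s\,\mathrm d|\rho|<\infty$. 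By homogeneity,
\[
\mathrm{ReLU}^s(w\cdot x+b)=r^s\,\mathrm{ReLU}^s\bigl(T(w,b)\cdot(x,1)\bigr).
\]
The change of variables formula then gives $N_\sigma(\nu)=N_\sigma(\rho)=f$. Moreover, $|\nu|\le T_\#(r^s|\rho|)$, and $\nu$ is supported on $S$, where $1+r^t=2$. Therefore
\[
\int(1+r^t)\,\mathrm d|\nu|=2|\nu|(S)\le 2\int r^s\,\mathrm d|\rho|.
\]
Taking the infimum over $\rho$ yields the bound and the reverse inclusion, so the two spaces coincide.

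\textbf{Main obstacle.} The only delicate point is the measure-theoretic bookkeeping in Step 2. One must check that $T$ is Borel measurable on $\{r>0\}$, that $|T_\#\mu|\le T_\#|\mu|$ holds for signed (or complex) measures, and that the identity $\int g\circ T\,\mathrm d\mu=\int g\,\mathrm d(T_\#\mu)$ applies to the bounded continuous integrand $g(u,c)=\mathrm{ReLU}^s(u\cdot x+c)$ on $S$. These are standard facts. The case $s=0$, where $\mathrm{ReLU}^0$ is the Heaviside function, is handled identically, because that function is still $0$-homogeneous on $\{r>0\}$.
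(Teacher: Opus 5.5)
Your proof is correct and follows essentially the same route as the paper: you use the pointwise bound $r^s\le 1+r^t$ for one direction, and for the other you rescale the representing measure onto the unit $\ell_1$-sphere via positive homogeneity, which is exactly the paper's $\rho_g$ with $g=\|w\|_1+|b|$. Your treatment of the atom at the origin and your use of $|T_\#\mu|\le T_\#|\mu|$ tighten two points the paper glosses over; in particular, the paper writes an equality for the norm of $\rho_g$ where only $\le$ holds.
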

    \begin{proof}
By definition, for any $f \in B^s$, we have the representation
\begin{align*}
    f(x) = \int_{\mathbb{R}^{d+1}} \mathrm{ReLU}^s(w \cdot x + b) \, d\rho,
\end{align*}
where $\rho$ is a signed Radon measure. The corresponding norms are then given by
\begin{align*}
    \| f \|_{B^s_\rho} = \int_{\mathbb{R}^{d+1}} \mathrm{ReLU}^s(\|w\|_1 + |b|) \, d|\rho|,
\end{align*}
and
\begin{align*}
    \| f \|_{B^{\varphi}_{\mathrm{ReLU}^s(x),\rho}} = \int_{\mathbb{R}^{d+1}} \big(1 + \mathrm{ReLU}^t(\|w\|_1 + |b|)\big) \, d|\rho|.
\end{align*}
Since \(u^s\le 1+u^t\) for \(u\ge0\) and \(t\ge s\), we immediately obtain \(\| f \|_{B^{\varphi}_{\mathrm{ReLU}^s(x),\rho}} \ge \| f \|_{B^s_\rho}\).

To establish the reverse inequality $\frac{1}{2}\|\cdot\|_{B_{\mathrm{ReLU}^s(x)}^\varphi} \le \|\cdot\|_{B^s}$, we observe that $f$ can be reparameterized as
\begin{align*}
    f &= \int_{\mathbb{R}^{d+1}} g^{s}\,\mathrm{ReLU}^s(g^{-1}w \cdot x + g^{-1}b) \, d\rho \\
    &= \int_{\mathbb{R}^{d+1}} \mathrm{ReLU}^s(w \cdot x + b) \, d\rho_g,
\end{align*}
where $g$ is a measurable function and $\rho_g$ is a finite signed Radon measure defined by
\begin{align*}
    \rho_g(D) = \int_{\mathbb{R^{d+1}}} g^{s} \mathbf{1}_{D}(g^{-1}w, g^{-1}b) \, d\rho,
\end{align*}
with $\rho_g(\mathbb{R}^{d+1}) < \infty$. Consequently,
\begin{align*}
    \| f \|_{B_{\mathrm{ReLU}^s(x),\rho_g}^\varphi} &= \int_{\mathbb{R}^{d+1}} \big( g^{s} + g^{s}\,\mathrm{ReLU}^t(\|g^{-1}w\|_1 + |g^{-1}b|) \big) \, d|\rho| \\
    &= \int_{\mathbb{R}^{d+1}} \big( g^{s} + g^{s-t}\,\mathrm{ReLU}^t(\|w\|_1 + |b|) \big) \, d|\rho|.
\end{align*}
Choosing \(g = \|w\|_1 + |b|\) gives the desired inequality \(\frac{1}{2}\|\cdot\|_{B_{\mathrm{ReLU}^s(x)}^{\varphi}} \le \|\cdot\|_{B^s}\).    

    \end{proof}
    
The above proposition suggests that the activation function \(\sigma\) determines the resulting generalized Barron space when its smoothness is less than that encoded by the norm function \(\varphi\). In particular, taking \(\varphi(x)=\exp(x)\) similarly gives the norm equivalence
\[
\Vert \cdot\Vert_{B^s} \simeq \Vert \cdot\Vert_{B_{\mathrm{ReLU}^s(x)}^{\exp(x)}}, \quad \forall s\in\mathbb{R}^{+}.
\]
This naturally raises the question: what happens when the activation function has higher smoothness than the norm function? We address this case in the following proposition, which establishes a connection between the general Barron space and the spectral Barron space.
\begin{proposition}\label{generalized_spectraBarron}        
Let $\varphi=(1+\mathrm{ReLU}(x))^s$ and $\sigma=\exp(i2\pi x)$. Then for $s\ge 1$, there holds $B_{\exp(i2\pi x)}^{(1+\mathrm{ReLU}(x))^s}=\mathscr{B}^s$, and the corresponding norms are equivalent:
\begin{equation*}
\|\cdot\|_{B_{\exp(i2\pi x)}^{(1+\mathrm{ReLU}(x))^s}}\simeq\|\cdot\|_{\mathscr{B}^s}.
\end{equation*}        
\end{proposition}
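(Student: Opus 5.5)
We prove the two inclusions separately, each with an explicit norm comparison.

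\emph{Step 1: $\mathscr{B}^s\subset B_{\exp(i2\pi x)}^{(1+\mathrm{ReLU}(x))^s}$.} Fix $f\in\mathscr{B}^s$ and an extension $f_e$ with $(1+|\xi|)^s\hat f_e\in L^1(\mathbb{R}^d)$. Since $\hat f_e\in L^1$ and $f_e$ is continuous, Fourier inversion holds pointwise:
\[
f(x)=\int_{\mathbb{R}^d}\hat f_e(\xi)e^{i2\pi \xi\cdot x}\,\mathrm{d}\xi .
\]
We take $\rho$ to be the complex measure on $\mathbb{R}^{d+1}$ supported on the hyperplane $\{b=0\}$, i.e.\ $\rho=\hat f_e(w)\,\mathrm{d}w\otimes\delta_0(b)$. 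Then $N_\sigma(\rho)=f$, and
\[
\|f\|_{B_{\sigma,\rho}^{\varphi}}=\int(1+\|w\|_1)^s|\hat f_e(w)|\,\mathrm{d}w\le d^{s/2}\int(1+|w|)^s|\hat f_e(w)|\,\mathrm{d}w ,
\]
using $\|w\|_1\le\sqrt d|w|$. Taking the infimum over extensions gives $\|f\|_{B^\varphi_\sigma}\le d^{s/2}\|f\|_{\mathscr{B}^s}$. The measure is finite and Radon because $\hat f_e\in L^1$.

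\emph{Step 2: $B_{\exp(i2\pi x)}^{(1+\mathrm{ReLU}(x))^s}\subset\mathscr{B}^s$.} Let $f=N_\sigma(\rho)$ with $\int(1+\|w\|_1+|b|)^s\,\mathrm{d}|\rho|<\infty$. The representation $\int e^{i2\pi b}e^{i2\pi w\cdot x}\,\mathrm{d}\rho(w,b)$ makes sense for all $x\in\mathbb{R}^d$, so it defines a natural bounded continuous extension. However, its Fourier transform is the pushforward measure $\mu=\pi_\#(e^{i2\pi b}\rho)$ under $(w,b)\mapsto w$, which is a measure and not an $L^1$ function. The definition of $\mathscr{B}^s$ requires $(1+|\xi|)^s\hat f_e\in L^1$, so we must mollify in frequency while keeping the extension equal to $f$ on $\Omega$.

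\emph{Key construction (main obstacle).} Choose a Schwartz cutoff $\chi$ with $\chi\equiv1$ on $\overline\Omega$ and $\hat\chi\in C_c^\infty$, or more simply $\hat\chi$ Schwartz. Set $f_e:=\chi\cdot\int e^{i2\pi(w\cdot x+b)}\,\mathrm{d}\rho$. Then $f_e=f$ on $\Omega$, and
\[
\hat f_e(\xi)=\int\hat\chi(\xi-w)e^{i2\pi b}\,\mathrm{d}\rho(w,b)\in L^1 .
\]
Using Peetre's inequality $(1+|\xi|)^s\le(1+|\xi-w|)^s(1+|w|)^s$ and Tonelli,
\[
\int(1+|\xi|)^s|\hat f_e(\xi)|\,\mathrm{d}\xi\le\Big(\int(1+|\eta|)^s|\hat\chi(\eta)|\,\mathrm{d}\eta\Big)\int(1+|w|)^s\,\mathrm{d}|\rho| .
\]
Since $|w|\le\|w\|_1\le\|w\|_1+|b|$, the last integral is at most $\|f\|_{B^\varphi_{\sigma,\rho}}$. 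Taking the infimum over $\rho$ gives $\|f\|_{\mathscr{B}^s}\le C_\chi\|f\|_{B^\varphi_\sigma}$. The only subtle point is that $\chi$ must equal $1$ on $\overline\Omega$ and yet have $\hat\chi$ with finite $s$-moment; a smooth compactly supported $\chi$ works, because its Fourier transform is Schwartz. The constant $C_\chi$ depends only on $d$ and $s$.

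The two inequalities together give $B_{\exp(i2\pi x)}^{(1+\mathrm{ReLU}(x))^s}=\mathscr{B}^s$ with equivalent norms. The assumption $s\ge1$ is not used essentially in this argument. It is presumably imposed so that $\varphi$ is compatible with the smoothness results earlier in the paper; the argument above works for any $s\ge0$.
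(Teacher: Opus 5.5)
Your proof is correct and follows essentially the same route as the paper: the measure $\widehat{f_e}(w)\,\mathrm{d}w\otimes\delta_0$ for one inclusion, and for the other the extension $f_e=\chi\cdot N_\sigma(\rho)$ with $\chi\in C_c^\infty$, $\widehat{f_e}=\widehat{\chi}*\pi_{\#}(e^{i2\pi b}\rho)$ and Peetre's inequality. Your explicit factor $d^{s/2}$ from $\|w\|_1\le\sqrt{d}\,|w|$ is a constant the paper's write-up suppresses. There is one small slip: the option $\widehat{\chi}\in C_c^\infty$ is impossible, since $\chi$ would then be real-analytic and $\chi\equiv 1$ on $\Omega$ would force $\chi\equiv 1$ everywhere. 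Your final choice of a smooth compactly supported $\chi$ is the right one.
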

    \begin{proof}
For \(f\in\mathscr{B}^s\), choose an extension \(f_e\) occurring in the definition of \(\mathscr{B}^s\) and define the complex Radon measure
\[
\mathrm{d}\rho(w,b):=\widehat{f_e}(w)\,\mathrm{d}w\,\delta_0(\mathrm{d}b).
\]
Fourier inversion then gives
\[
f(x)=\int_{\mathbb{R}^{d+1}}e^{i2\pi(w\cdot x+b)}\,\mathrm{d}\rho(w,b),
\qquad x\in\Omega,
\]
and
\[
\|f\|_{B_{\exp(i2\pi x),\rho}^{(1+\mathrm{ReLU}(x))^s}}
=\int_{\mathbb{R}^d}(1+\|w\|_1)^s|\widehat{f_e}(w)|\,\mathrm{d}w.
\]
Taking the infimum over all admissible extensions yields
\[
\|f\|_{B_{\exp(i2\pi x)}^{(1+\mathrm{ReLU}(x))^s}}
\le \|f\|_{\mathscr{B}^s}.
\]

Conversely, let \(f\in B_{\exp(i2\pi x)}^{(1+\mathrm{ReLU}(x))^s}\) have a representation by \(\rho\in\mathcal{M}^{\varphi}(\mathbb{R}^{d+1})\). Let \(\pi(w,b)=w\) and define the complex measure
\[
\mu:=\pi_{\#}\bigl(e^{i2\pi b}\rho\bigr)
\quad\text{on }\mathbb{R}^d.
\]
Then \(f(x)=\int_{\mathbb{R}^d}e^{i2\pi w\cdot x}\,\mathrm{d}\mu(w)\) on \(\Omega\), and the total-variation inequality for pushforward measures gives
\[
\int_{\mathbb{R}^d}(1+\|w\|_1)^s\,\mathrm{d}|\mu|(w)
\le\int_{\mathbb{R}^{d+1}}(1+\|w\|_1+|b|)^s\,\mathrm{d}|\rho|(w,b).
\]
Choose \(\chi\in C_c^\infty(\mathbb{R}^d)\) with \(\chi=1\) on \(\Omega\), and define
\[
f_e(x):=\chi(x)\int_{\mathbb{R}^d}e^{i2\pi w\cdot x}\,\mathrm{d}\mu(w).
\]
Then \(f_e|_\Omega=f\) and \(\widehat{f_e}=\widehat{\chi}*\mu\). Since \(\widehat{\chi}\) is a Schwartz function, Fubini's theorem and
\[
(1+\|w\|_1)^s\le(1+\|w-t\|_1)^s(1+\|t\|_1)^s
\]
imply
\[
\begin{aligned}
\int_{\mathbb{R}^d}(1+\|w\|_1)^s|\widehat{f_e}(w)|\,\mathrm{d}w
&\le C_{\chi,s}\int_{\mathbb{R}^d}(1+\|t\|_1)^s\,\mathrm{d}|\mu|(t)\\
&\le C_{\chi,s}\int_{\mathbb{R}^{d+1}}(1+\|w\|_1+|b|)^s\,\mathrm{d}|\rho|(w,b).
\end{aligned}
\]
Taking the infimum over all representing measures \(\rho\) proves the reverse norm inequality. For real-valued \(f\), the representing complex measure may be chosen Hermitian symmetric.
    \end{proof}
    
\begin{remark}
As established in Proposition \ref{generalized_spectraBarron} above, the spaces \(\mathscr{B}^s\) and \(B^s\) are equivalent to generalized Barron spaces with weight function \(\varphi(x) = 1 + \operatorname{ReLU}^s(x)\). Moreover, \(\varphi\) can be replaced by \(\psi(x) = (1+\operatorname{ReLU}(x))^s\), since \(\varphi \simeq \psi\) defines the same generalized Barron space.

Now set \(\psi_s(x) := \frac{\psi(x)}{(1+\operatorname{ReLU}(x))^s} \equiv 1\), which is positive and non‑decreasing. In view of the bounds
\begin{align*}
& |\partial_x^m \exp(i 2\pi x)| \le (2\pi)^m \, (1+\mathrm{ReLU}(x))^{s-m}, \\
& \partial_x^m \operatorname{ReLU}^s(x) \le \frac{s!}{(s-m)!} \, (1+\mathrm{ReLU}(x))^{s-m},
\end{align*}
the assumptions of Theorem \ref{sobolevembed} are satisfied. Consequently, we obtain the continuous embeddings
\[
\mathscr{B}^s, \; B^s \hookrightarrow W^{s,\infty}.
\]
\end{remark}
    
\subsection{Embedding property of generalized Barron spaces}\label{subse_Barronembedding}
In the two preceding subsections, we introduced generalized Barron spaces and examined various properties of such spaces, either with a fixed activation function and varying norm functions, i.e., pairs of the form \(B_{\sigma}^{\varphi_1}\) and \(B_{\sigma}^{\varphi_2}\), or with a fixed norm function and varying activation functions, i.e., \(B_{\sigma_1}^{\varphi}\) and \(B_{\sigma_2}^{\varphi}\). In this section, motivated by results in \cite{embeddingbarron}, we extend the discussion to a more general setting and investigate embedding properties between \(B_{\sigma_1}^{\varphi_1}\) and \(B_{\sigma_2}^{\varphi_2}\), where both the activation functions and the norm functions may differ. To this end, we need the following definition in \cite{measuretheory}.
\begin{definition}[Finite signed complex kernel]\cite{measuretheory}
Let \((X, \mathscr{A})\) and \((Y, \mathscr{B})\) be measurable spaces. A map \(K:X\times\mathscr{B}\to\mathbb{C}\) is called a finite signed complex kernel from \((X, \mathscr{A})\) to \((Y, \mathscr{B})\) if
\begin{itemize}
    \item[(i)] for each fixed \(x \in X\), the map \(B \mapsto K(x, B)\) is a finite signed complex measure on \((Y, \mathscr{B})\), and  
    \item[(ii)] for each fixed \(B \in \mathscr{B}\), the map \(x \mapsto K(x, B)\) is \(\mathscr{A}\)-measurable.
\end{itemize}
We write \(|K|(x,\cdot)\) for the total variation of \(K(x,\cdot)\) and assume that \(x\mapsto |K|(x,B)\) is measurable for every \(B\in\mathscr{B}\).
\end{definition}

 In particular, the following lemma, which is analogous to \cite[Ex.~7, Sec.~2.6]{measuretheory}, plays a central role in the embedding argument below; its proof is given in Appendix~\ref{appendix_B}.  
\begin{lemma}\label{kernel_prop}
Suppose that \(K\) is a finite signed complex kernel from \((X,\mathscr{A})\) to \((Y,\mathscr{B})\), that \(\mu\) is a finite signed complex measure on \((X,\mathscr{A})\), and that
\[
\int_X |K|(x,Y)\,\mathrm{d}|\mu|(x)<\infty.
\]
Then the set function \(\nu(B):=\int_X K(x,B)\,\mu(\mathrm{d}x)\) is a finite signed complex measure and satisfies
\[
|\nu|(B)\le \int_X |K|(x,B)\,\mathrm{d}|\mu|(x).
\]
Moreover, if \(f:Y\to\mathbb{C}\) is measurable and
\[
\int_X\int_Y |f(y)|\,\mathrm{d}|K|(x,y)\,\mathrm{d}|\mu|(x)<\infty,
\]
then \(x\mapsto\int_Y f(y)\,K(x,\mathrm{d}y)\) is measurable and integrable, and
\[
\int_Y f(y)\,\nu(\mathrm{d}y)
=\int_X\left(\int_Y f(y)\,K(x,\mathrm{d}y)\right)\mu(\mathrm{d}x).
\]
\end{lemma}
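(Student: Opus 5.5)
The plan is to establish the three claims in turn: that $\nu$ is a complex measure, the total-variation bound, and the Fubini-type identity for integrals against $\nu$. Throughout I assume $\mathscr{A}$ and $\mathscr{B}$ are $\sigma$-algebras and $|\mu|$ is a finite positive measure. For the first claim, fix $B\in\mathscr{B}$. The map $x\mapsto K(x,B)$ is measurable by (ii), and $|K(x,B)|\le |K|(x,Y)$. The hypothesis therefore gives integrability against $|\mu|$. Writing $\mu(\mathrm{d}x)=h(x)\,|\mu|(\mathrm{d}x)$ with $|h|=1$ (polar decomposition / Radon--Nikodym), $\nu(B)=\int_X K(x,B)h(x)\,\mathrm{d}|\mu|(x)$ is a well-defined complex number. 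Clearly $\nu(\emptyset)=0$. For countable additivity, take disjoint $B_j$ with union $B$. For each $x$, $K(x,B)=\sum_j K(x,B_j)$, and the partial sums are dominated by $\sum_j|K(x,B_j)|\le |K|(x,Y)$, which is $|\mu|$-integrable. Dominated convergence then yields $\nu(B)=\sum_j\nu(B_j)$, with absolute convergence since $\sum_j|\nu(B_j)|\le\int_X|K|(x,Y)\,\mathrm{d}|\mu|$. Hence $\nu$ is a finite complex measure.

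For the variation bound, define $\lambda(B):=\int_X|K|(x,B)\,\mathrm{d}|\mu|(x)$. This is measurable in $x$ by the standing assumption of the kernel definition, and it is a finite positive measure by monotone convergence. For any finite (or countable) partition $\{B_j\}$ of $B$,
\[
\sum_j|\nu(B_j)|\le\sum_j\int_X|K(x,B_j)|\,\mathrm{d}|\mu|\le\int_X\sum_j|K|(x,B_j)\,\mathrm{d}|\mu|=\lambda(B).
\]
Taking the supremum over partitions gives $|\nu|(B)\le\lambda(B)$.

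For the integral identity, I would run the standard machine. First, for $f=\mathbf 1_B$ the identity is the definition of $\nu$. By linearity it then holds for simple functions. Measurability of $x\mapsto\int_Y f\,K(x,\mathrm{d}y)$ follows for simple $f$ from (ii), and then for general $f$ as a pointwise limit. Next, for measurable $f$ with $\int_X\int_Y|f|\,\mathrm{d}|K|(x,\cdot)\,\mathrm{d}|\mu|<\infty$, choose simple $f_n\to f$ pointwise with $|f_n|\le|f|$. For each $x$ with $\int_Y|f|\,\mathrm{d}|K|(x,\cdot)<\infty$ (a $|\mu|$-full set), dominated convergence in $y$ gives $\int_Y f_n K(x,\mathrm{d}y)\to\int_Y fK(x,\mathrm{d}y)$. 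These inner integrals are bounded by $\int_Y|f|\,\mathrm{d}|K|(x,\cdot)$, which is $|\mu|$-integrable, so dominated convergence in $x$ passes the limit on the right-hand side. On the left, the step above gives $|\nu|\le\lambda$, and $\int_Y|f|\,\mathrm{d}\lambda=\int_X\int_Y|f|\,\mathrm{d}|K|(x,\cdot)\,\mathrm{d}|\mu|<\infty$. This identity itself is the positive-kernel version of the result, obtained again by simple-function approximation and monotone convergence. Hence $f\in L^1(|\nu|)$, and dominated convergence gives $\int f_n\,\mathrm{d}\nu\to\int f\,\mathrm{d}\nu$.

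The main obstacle is this last domination step. One must know that $f$ is $|\nu|$-integrable, and it is exactly here that the variation bound and the positive-kernel identity for $\lambda$ are needed. A secondary subtlety is measurability of $x\mapsto\int_Y f\,\mathrm{d}|K|(x,\cdot)$, which rests on the assumption that $x\mapsto|K|(x,B)$ is measurable.
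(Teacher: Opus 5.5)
Your proof is correct, and it takes a different route from the paper. The paper defines a joint set function $\eta(A\times B)=\int_A K(x,B)\,\mu(\mathrm{d}x)$ on measurable rectangles. It asserts that $\eta$ extends to a finite complex measure on $\mathscr{A}\otimes\mathscr{B}$ with $|\eta|(A\times B)\le\int_A|K|(x,B)\,\mathrm{d}|\mu|(x)$. It then takes $\nu$ as the $Y$-marginal of $\eta$ and cites a Fubini theorem for complex measures to get the integral identity.

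You never pass to $X\times Y$:
\begin{itemize}
\item countable additivity of $\nu$ comes from dominated convergence with dominating function $|K|(x,Y)$;
\item the variation bound comes from summing over partitions;
\item the identity comes from simple-function approximation, with the positive measure $\lambda(B)=\int_X|K|(x,B)\,\mathrm{d}|\mu|(x)$ and the positive-kernel identity $\int_Y|f|\,\mathrm{d}\lambda=\int_X\int_Y|f|\,\mathrm{d}|K|(x,\cdot)\,\mathrm{d}|\mu|(x)$ supplying the $L^1(|\nu|)$ domination on the left-hand side.
\end{itemize}

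The paper's route is shorter, and it presents $\nu$ as a marginal of the kernel-product measure $\mu\otimes K$, which helps if one later needs joint integrals in $(x,y)$. However, it asserts two steps rather than proving them. The first is extending a complex set function from rectangles, which requires checking countable additivity and bounded variation on the semiring of rectangles; this is essentially the computation you carry out. The second is Fubini for a measure that is not a product measure, which is itself proved by the same simple-function argument you give.

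Your argument is self-contained and uses only monotone and dominated convergence. It also makes explicit where the standing assumption that $x\mapsto|K|(x,B)$ is measurable enters. That assumption gives measurability of $\lambda$ and of $x\mapsto\int_Y|f|\,\mathrm{d}|K|(x,\cdot)$, and hence measurability of the $|\mu|$-full set on which the inner integral $\int_Y f(y)\,K(x,\mathrm{d}y)$ is defined.
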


Based on the above definition and lemma, we now present our main result regarding the embedding relationships between generalized Barron spaces.
\begin{theorem}\label{embed_Barron}
Let \(\sigma_1\) and \(\sigma_2\) be two activation functions, and let \(\varphi_1, \varphi_2 \) be associated norm functions. Suppose \(f \in B_{\sigma_2}^{\varphi_2}\) admits a representation  
\begin{equation}\label{int_rep_cond}
    f(x) = \int_{\mathbb{R}^{d+1}} \sigma_2(w \cdot x + b) \, d\rho(w,b), \qquad x \in \Omega,
\end{equation}
where \(\rho\in\mathcal{M}^{\varphi_2}(\mathbb{R}^{d+1})\). Assume that \((w,b)\mapsto\mu_{(w,b)}\) is a finite signed complex kernel from \(\mathbb{R}^{d+1}\) to itself such that for every \((w,b) \in \operatorname{supp} \rho\) and \(x \in \Omega\),  
\[
\sigma_2(w \cdot x + b) = \int_{\mathbb{R}^{d+1}} \sigma_1(\omega \cdot x + \beta) \, d\mu_{(w,b)}(\omega,\beta),
\]  
and the following estimate holds uniformly for \((w,b) \in \operatorname{supp} \rho\):  
\begin{equation}\label{meas_bdd_cond}
    \int_{\mathbb{R}^{d+1}} \varphi_1\bigl(\|\omega\|_1 + |\beta|\bigr) \, d|\mu_{(w,b)}|(\omega,\beta) \le M \, \varphi_2\bigl(\|w\|_1 + |b|\bigr),
\end{equation}
with a constant \(M > 0\) independent of \((w,b)\). Then there exists a finite signed complex Radon measure \(\nu\) on \(\mathbb{R}^{d+1}\) such that  
\[
f(x) = \int_{\mathbb{R}^{d+1}} \sigma_1(\omega \cdot x + \beta) \, d\nu(\omega,\beta), \qquad x \in \Omega,
\]  
and  
\[
\|f\|_{B_{\sigma_1,\nu}^{\varphi_1}} \le M \, \|f\|_{B_{\sigma_2,\rho}^{\varphi_2}}.
\]  

Consequently, for this particular \(f\) and the chosen representing measure
\(\rho\), there holds \(f\in B_{\sigma_1}^{\varphi_1}\) and
\[
\|f\|_{B_{\sigma_1}^{\varphi_1}}
\le M\|f\|_{B_{\sigma_2,\rho}^{\varphi_2}}.
\]
Suppose that there exists a sequence of representing measures
\(\{\rho_n\}_{n\in\mathbb N}\) of \(f\) such that
\[
\|f\|_{B_{\sigma_2,\rho_n}^{\varphi_2}}
\longrightarrow
\|f\|_{B_{\sigma_2}^{\varphi_2}},
\]
and that the above assumptions hold for each \(\rho_n\), with the same
constant \(M\). Then, passing to the limit as \(n\to\infty\), there holds
\[
\|f\|_{B_{\sigma_1}^{\varphi_1}}
\le M\|f\|_{B_{\sigma_2}^{\varphi_2}}.
\]
If, moreover, every \(f\in B_{\sigma_2}^{\varphi_2}\) admits such a
sequence for which the above assumptions hold with a constant \(M\)
independent of both \(f\) and \(n\), then
\[
B_{\sigma_2}^{\varphi_2}
\hookrightarrow B_{\sigma_1}^{\varphi_1}
\]
continuously, with embedding norm at most \(M\).

\end{theorem}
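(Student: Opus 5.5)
The plan is to build $\nu$ by integrating the kernel against $\rho$ and then to check the two claimed properties using Lemma~\ref{kernel_prop}. Concretely, set
\[
\nu(B):=\int_{\mathbb{R}^{d+1}}\mu_{(w,b)}(B)\,\mathrm{d}\rho(w,b),\qquad B\ \text{Borel}.
\]
Since only $\operatorname{supp}\rho$ matters, we may redefine the kernel to be $0$ off $\operatorname{supp}\rho$ without changing this integral.

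\textbf{Step 1: $\nu$ is a finite measure.} We need $\int |\mu_{(w,b)}|(\mathbb{R}^{d+1})\,\mathrm{d}|\rho|<\infty$. Because $\varphi_1\ge\varphi_1(0)>0$ and $\varphi_1$ is non-decreasing, \eqref{meas_bdd_cond} gives
\[
|\mu_{(w,b)}|(\mathbb{R}^{d+1})\le \varphi_1(0)^{-1}M\varphi_2(\|w\|_1+|b|).
\]
This is integrable against $|\rho|$ since $\rho\in\mathcal{M}^{\varphi_2}$. Lemma~\ref{kernel_prop} then shows that $\nu$ is a finite signed complex measure with
\[
|\nu|(B)\le\int|\mu_{(w,b)}|(B)\,\mathrm{d}|\rho|.
\]
Radon regularity is automatic for finite Borel measures on $\mathbb{R}^{d+1}$.

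\textbf{Step 2: the norm bound.} Extend the inequality on $|\nu|$ from indicators to nonnegative simple functions, then by monotone convergence to the nonnegative measurable function $\varphi_1(\|\omega\|_1+|\beta|)$. This gives
\[
\int\varphi_1\,\mathrm{d}|\nu|\le\int\!\!\int\varphi_1\,\mathrm{d}|\mu_{(w,b)}|\,\mathrm{d}|\rho|\le M\int\varphi_2(\|w\|_1+|b|)\,\mathrm{d}|\rho|,
\]
using \eqref{meas_bdd_cond} on $\operatorname{supp}\rho$, which has full $|\rho|$-measure.

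\textbf{Step 3: the representation.} Fix $x\in\Omega$ and apply the second part of Lemma~\ref{kernel_prop} with $f(y)=\sigma_1(\omega\cdot x+\beta)$. The integrability hypothesis follows from the norm-function bound $|\sigma_1(t)|\le C\varphi_1(|t|)\le C\varphi_1(\|\omega\|_1+|\beta|)$, which holds because $\|x\|_\infty\le1$, combined with Step 2. The lemma then yields
\[
\int\sigma_1(\omega\cdot x+\beta)\,\mathrm{d}\nu=\int\!\left(\int\sigma_1\,\mathrm{d}\mu_{(w,b)}\right)\mathrm{d}\rho=\int\sigma_2(w\cdot x+b)\,\mathrm{d}\rho=f(x).
\]
It follows that $f\in B_{\sigma_1}^{\varphi_1}$, and $\|f\|_{B_{\sigma_1}^{\varphi_1}}\le\|f\|_{B_{\sigma_1,\nu}^{\varphi_1}}\le M\|f\|_{B^{\varphi_2}_{\sigma_2,\rho}}$.

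\textbf{Step 4: the remaining statements.} Applying Step 3 to each $\rho_n$ gives $\|f\|_{B_{\sigma_1}^{\varphi_1}}\le M\|f\|_{B^{\varphi_2}_{\sigma_2,\rho_n}}$ for every $n$; letting $n\to\infty$ yields the bound with $\|f\|_{B_{\sigma_2}^{\varphi_2}}$. If $M$ is uniform in $f$ and $n$, this is exactly the statement that the inclusion map is bounded with norm at most $M$, i.e.\ the continuous embedding $B_{\sigma_2}^{\varphi_2}\hookrightarrow B_{\sigma_1}^{\varphi_1}$.

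\textbf{Main obstacle.} The delicate point is the measure-theoretic bookkeeping in Steps 1--2. Lemma~\ref{kernel_prop} bounds $|\nu|$ only on sets, so we must pass from sets to the weighted integral by monotone approximation. We must also handle the fact that \eqref{meas_bdd_cond} is assumed only on $\operatorname{supp}\rho$; this is harmless because $|\rho|(\mathbb{R}^{d+1}\setminus\operatorname{supp}\rho)=0$. Finally, the measurability of $x\mapsto|K|(x,B)$, needed to apply the lemma, is built into the definition of a kernel.
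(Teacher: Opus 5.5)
Your proposal is correct and follows the same route as the paper. Like the paper, you define $\nu(B)=\int\mu_{(w,b)}(B)\,\mathrm{d}\rho(w,b)$ and invoke Lemma~\ref{kernel_prop} for both the total-variation bound and the interchange of integrals that gives the $\sigma_1$-representation. You then bound $\int\varphi_1\,\mathrm{d}|\nu|$ via \eqref{meas_bdd_cond} and pass to the limit over $\rho_n$.

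Your extra bookkeeping fills in details the paper leaves implicit. Specifically, you check $\int|\mu_{(w,b)}|(\mathbb{R}^{d+1})\,\mathrm{d}|\rho|<\infty$ using $\varphi_1\ge\varphi_1(0)>0$, and you extend the set-wise bound on $|\nu|$ to the weight $\varphi_1$ by monotone convergence. You also verify the Fubini integrability through $|\sigma_1(t)|\le C\varphi_1(|t|)$.
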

    \begin{proof}
        Given any $f\in B_{\sigma_2}^{\varphi_2}$, $f(x)=\int_{\mathbb{R}^{d+1}} \sigma_2(w\cdot x+b)\mathrm{d}\rho$, by Lemma \ref{kernel_prop}, we have
        \begin{align*}
            f(x)&=\int_{\mathbb{R}^{d+1}} \sigma_2(w\cdot x+b)\mathrm{d}\rho\\
                &=\int_{\mathbb{R}^{d+1}} \left[\int_{\mathbb{R}^{d+1}}\sigma_1\left(\omega\cdot x+\beta\right) \mathrm{d}\mu_{(w,b)}\right]\mathrm{d}\rho\\
                &=\int_{\mathbb{R}^{d+1}}\sigma_1(\omega\cdot x+\beta)\mathrm{d}\nu(\omega,\beta)
        \end{align*}
        where the finite signed complex measure \(\nu\) is given by \(\nu(E):=\int_{\mathbb{R}^{d+1}}\mu_{(w,b)}(E)\,\mathrm{d}\rho(w,b)\), as in Lemma~\ref{kernel_prop}.

    Therefore, we can derive
        \begin{align*}
            \|f\|_{B_{\sigma_1,\nu}^{\varphi_1}} &=\int_{\mathbb{R}^{d+1}}\varphi_1(\|w \|_1+|b|)\mathrm{d}|\nu|\\
            &\le \int_{\mathbb{R}^{d+1}} \left[\int_{\mathbb{R}^{d+1}}\varphi_1\left(\|\omega\|_1+|\beta|\right) \mathrm{d}|\mu_{(w,b)}|\right]\mathrm{d}|\rho|\\
            &\le M\int_{\mathbb{R}^{d+1}}\varphi_2\left(\|w\|_1+|b|\right) \mathrm{d}|\rho|\\
            &=M\|f\|_{B_{\sigma_2,\rho}^{\varphi_2}}
        \end{align*}
    which ends the proof. 
    \end{proof}
    
We present an extended discussion on the application of the above theorem.
\begin{remark}
There are various approaches to deriving integral representations. In \cite{embeddingbarron}, such representations are obtained via Taylor expansion; they can also be obtained via the Fourier transform for activation functions of the form \(\exp(i2\pi x)\), or via the wavelet transform for a broader class of activation functions.   
As an illustration, using the Taylor expansion of \(\exp(i w \cdot x)\), we obtain the embedding \(\mathscr{B}^{m+1} \hookrightarrow B^m\) for every \(m \in \mathbb{N}\). The proof proceeds as follows. For any fixed \(w\), the exponential function admits the Taylor expansion with integral remainder:
\begin{align}\label{exp_int_rep}
e^{i w \cdot x} 
&= \sum_{k=0}^m \frac{i^k}{k!} (w \cdot x)^k 
   + \int_0^{w \cdot x} \frac{i^{m+1} e^{i\beta}}{m!} (w \cdot x - \beta)^m \, d\beta \\
&= \sum_{k=0}^m \frac{i^k}{k!} (w \cdot x)^k 
   + \frac{i^{m+1}}{m!} \int_0^{c\|w\|_1} e^{i\beta} \, \mathrm{RePu}^m(w \cdot x - \beta) \, d\beta \nonumber\\
&\quad + (-1)^{m-1} \frac{i^{m+1}}{m!} \int_0^{c\|w\|_1} e^{-i\beta} \, \mathrm{RePu}^m(-w \cdot x - \beta) \, d\beta, \nonumber
\end{align}
where the first term is a polynomial of degree \(m\), which can be expressed as a linear combination of \(\{(w_i \cdot x + b_i)^k\}_{i=1}^{m+1}\) for suitably chosen \(\{(w_i, b_i)\}_{i=1}^{m+1}\). Therefore, $\eqref{int_rep_cond}$ is satisfied.

Now we turn to verify such representation satisfies $\eqref{meas_bdd_cond}$. The associated Barron norm of the measure corresponding to the first term of $\eqref{exp_int_rep}$ can be bounded by \((1 + \|w\|_1 + |b|)^m\) due to homogeneity in \(w\).  

Denoting the measure for the second term of $\eqref{exp_int_rep}$ as \(\mu\) and adapting the notation of Theorem $\ref{embed_Barron}$, we have  
\[
\int_{\mathbb{R}^{d+1}} (1 + \|\omega\|_1 + |\beta|)^m \, d|\mu| 
\le \frac{2}{m!} \int_0^{c\|w\|_1} (1 + \|w\|_1 + \beta)^m \, d\beta 
\le M (1 + \|w\|_1)^{m+1},
\]
where \(M\) is a constant. Applying Theorem \ref{embed_Barron} completes the proof.
    \end{remark}

A more compelling example of embedding arises in the context of spectral Barron spaces. In \cite{Xu_highorder}, an exponential variant of these spaces is introduced:  
\begin{equation}\label{eq_expspectralbarron}
    \mathscr{B}_{\beta,c}(\Omega) :=\left\{ f:\Omega \rightarrow\mathbb{R} \;\Bigg|\; \|f\|_{\mathscr{B}_{\beta,c}(\Omega)}:=\inf_{f_e|_{\Omega}=f} \int_{\mathbb{R}^d} \exp\bigl(c\|\xi\|_1^\beta\bigr)\,|\hat{f_e}(\xi)|\,\mathrm{d}\xi <\infty \right\},  
\end{equation}  
where the infimum is taken over all extensions \(f_e \in L^1(\mathbb{R}^d)\) of \(f\). Functions belonging to such Barron spaces $\mathscr{B}_{\beta,c}(\Omega)$ in (\ref{eq_expspectralbarron}) can be effectively approximated by neural networks that employ the cosine (\(\cos\)) activation function.

In our framework, we define  
\[
\mathscr{B}_{\beta,c}(\Omega) = B_{\exp(i2\pi x)}^{\exp(c x^\beta)}(\Omega).
\]  
Let  
\[
\sigma(x) = \frac{1}{1+\exp(-x)}
\]  
denote the sigmoid activation function. Using the embedding theorem stated above, we shall investigate how to choose \(\varphi\) such that  
\[
B_{\sigma}^{\varphi} \hookrightarrow \mathscr{B}_{\beta,c},
\]
which is summarized in the following proposition. 
\begin{proposition}
For any $\kappa>0$, there exists $c_1,\varepsilon>0$, such that
$$B_{\frac{1}{1+\exp(-x)}}^{\exp(c_1x^{\kappa})}(\Omega)\hookrightarrow \mathscr{B}_{\frac{\kappa}{1+\kappa},\varepsilon}(\Omega). $$
\end{proposition}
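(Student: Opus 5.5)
We apply Theorem~\ref{embed_Barron} with $\sigma_2=\sigma$ the sigmoid, $\varphi_2(x)=\exp(c_1x^\kappa)$, $\sigma_1=\exp(i2\pi x)$ and $\varphi_1(x)=\exp(\varepsilon x^{\beta})$ with $\beta=\kappa/(1+\kappa)$. For fixed $(w,b)$ we need a Fourier-type representation
\[
\sigma(w\cdot x+b)=\int_{\mathbb{R}^{d+1}} e^{i2\pi(\omega\cdot x+\beta')}\,\mathrm{d}\mu_{(w,b)}(\omega,\beta'),\qquad x\in\Omega ,
\]
with
\[
\int \exp\bigl(\varepsilon(\|\omega\|_1+|\beta'|)^{\beta}\bigr)\,\mathrm{d}|\mu_{(w,b)}|\le M\exp\bigl(c_1(\|w\|_1+|b|)^\kappa\bigr).
\]
The natural choice puts $\mu_{(w,b)}$ on the line $\omega=\xi w$, $\beta'=0$, so that $\xi$ ranges over the one-dimensional frequencies of a compactly supported modification of $t\mapsto\sigma(t)$ on the interval $t\in[-R,R]$, where $R=\|w\|_1+|b|$. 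Only $|w\cdot x+b|\le R$ matters on $\Omega$.

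\textbf{Localization and the one-dimensional problem.} Since $\sigma$ is not integrable, we first write $\sigma=\tfrac12+\tilde\sigma$, where $\tilde\sigma$ is odd and decays exponentially, or we multiply by a cutoff. The constant $\tfrac12$ is represented by a Dirac mass at $\omega=0$, which has cost $O(1)$. The key step is to construct, for each $R\ge1$, a function $g_R$ on $\mathbb{R}$ with $g_R=\sigma$ on $[-R,R]$ and
\[
\int_{\mathbb{R}}\exp(\varepsilon'|\xi|^{\beta})\,|\widehat{g_R}(\xi)|\,\mathrm{d}\xi\le M\exp(c_1R^\kappa).
\]
Setting $\mathrm{d}\mu_{(w,b)}(\omega,\beta')=\widehat{g_R}(\xi)e^{i2\pi\xi b}\,\mathrm{d}\xi$ pushed forward under $\xi\mapsto(\xi w,0)$, with the phase $e^{i2\pi\xi b}$ absorbed into the density (this is allowed since $\mu$ may be complex), we get $\sigma(w\cdot x+b)=\int g_R$-Fourier terms. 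We also have $\|\xi w\|_1\le|\xi|R$. Therefore
\[
\exp\bigl(\varepsilon(|\xi|R)^\beta\bigr)\le \exp\bigl(\varepsilon|\xi|^{\beta}R^{\beta}\bigr),
\]
and this factor $R^\beta$ has to be traded against the growth of $\varphi_2$. Here Young's inequality produces the exponent $\beta=\kappa/(1+\kappa)$. Writing $(|\xi|R)^{\beta}=|\xi|^{\beta}R^{\beta}$ with $\beta=\kappa/(1+\kappa)$, we have
\[
|\xi|^{\beta}R^{\beta}\le \tfrac{1}{1+\kappa}|\xi|^{\kappa\cdot\frac{1}{\kappa}\cdot\frac{\kappa}{1}\cdot\frac{1}{\kappa}}\cdots
\]
More cleanly, by Young with exponents $p=1+\kappa$ and $q=(1+\kappa)/\kappa$,
\[
\varepsilon|\xi|^\beta R^\beta\le \tfrac{\delta}{q}R^{\beta q}+C_\delta\,\varepsilon^{p}|\xi|^{\beta p}=\delta' R^{\kappa}+C'\varepsilon^{1+\kappa}|\xi|^{\kappa}.
\]
So what we need is that $\widehat{g}$ decays like $\exp(-a|\xi|^{\kappa})$, or even faster, which holds for the analytic sigmoid.

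\textbf{Analyticity and decay.} The sigmoid is analytic in the strip $|\operatorname{Im} z|<\pi$. The odd part $\tilde\sigma$ equals $\tfrac12\tanh(x/2)$ up to a constant. Its derivative $\sigma'$ is in $L^1$ and has Fourier transform $\pi\xi/\sinh(\pi^2\xi)$ (up to normalization), which decays like $\exp(-\pi^2|\xi|)$. Instead of cutting off, I would therefore represent $\sigma$ globally. Since $\sigma(t)=\tfrac12+\int_0^t\sigma'$, we have
\[
\sigma(t)=\tfrac12+\int_{\mathbb{R}}\widehat{\sigma'}(\xi)\,\frac{e^{i2\pi\xi t}-1}{i2\pi\xi}\,\mathrm{d}\xi .
\]
This is a genuine integral of exponentials, together with a constant term that collects the $-1$ pieces. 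The trouble is that $1/\xi$ is singular near $0$, so near $\xi=0$ I would use a Gevrey-type cutoff. Concretely, take $g_R=\sigma\cdot\chi_R$, where $\chi_R$ is a Gevrey class $1+1/\kappa$ cutoff equal to $1$ on $[-R,R]$ and supported on $[-2R,2R]$. Its Fourier transform satisfies
\[
|\widehat{\chi_R}(\xi)|\lesssim R\exp\bigl(-a(R|\xi|)^{\kappa/(1+\kappa)}\bigr).
\]
A convolution with $\widehat{\sigma}$, which is exponentially decaying away from the origin, then gives decay $\exp(-a'|\xi|^{\beta})$ uniformly. The price is a factor $\mathrm{poly}(R)$, which is absorbed by $\exp(c_1R^\kappa)$.

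\textbf{Main obstacle.} The hardest step is obtaining the weighted Fourier bound for $g_R$ with constants that are explicit in $R$ and at most $\exp(c_1R^\kappa)$, while balancing the Gevrey cutoff exponent against the scaling $\|\xi w\|_1\le|\xi|R$. I would first try Gevrey cutoffs of the form $\exp(-1/t^{1/\kappa})$ and apply Young's inequality as above, choosing $\varepsilon$ small and $c_1$ large. After that, measurability of the kernel (via the explicit density in $(w,b)$) and Lemma~\ref{kernel_prop} are routine. Theorem~\ref{embed_Barron}, applied to near-optimal representing measures $\rho_n$ with the uniform constant $M$, then gives the embedding.
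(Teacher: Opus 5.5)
\textbf{There is a genuine gap in the proposal.} Your framework is the same as the paper's: Theorem~\ref{embed_Barron} with an exponential kernel, a Gevrey cutoff of order $(1+\kappa)/\kappa$, the $1/\sinh$ form of $\widehat\sigma$, and a Young-type trade-off. However, the decisive weighted estimate is left open, and both quantitative claims you make about it are incorrect.

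\emph{The Young step.} Your displayed inequality is not what Young gives. With conjugate exponents $1+\kappa$ and $(1+\kappa)/\kappa$ and $|\xi|^\beta$ in the $(1+\kappa)$-slot, the $R$-term is $R^{\beta(1+\kappa)/\kappa}=R$, not $R^\kappa$. As written, $\varepsilon|\xi|^\beta R^\beta\le\delta'R^\kappa+C'\varepsilon^{1+\kappa}|\xi|^\kappa$ fails for $\kappa<1$: take $|\xi|=R\to\infty$, and the left side is $\varepsilon R^{2\kappa/(1+\kappa)}$. For $\kappa\ge1$ it would require $|\widehat{g_R}(\xi)|\lesssim e^{-a|\xi|^\kappa}$, which no nonzero compactly supported $g_R$ satisfies (Paley--Wiener). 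The correct split is $\varepsilon R^\beta|\xi|^\beta\le\pi^2|\xi|+C_\kappa\varepsilon^{1+\kappa}R^\kappa$. What is traded is the linear-exponential decay $e^{-2\pi^2|t|}$ of $\widehat\sigma$, which comes from the sigmoid's analyticity strip, not any $|\xi|^\kappa$-decay.

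\emph{The uniform-decay step.} A uniform bound $\exp(-a'|\xi|^\beta)$ on $\widehat{g_R}$ with a $\mathrm{poly}(R)$ loss cannot close the argument. On the line $\omega=\xi w$ the weight is $\exp(\varepsilon(\|w\|_1|\xi|)^\beta)$, which is not integrable against $e^{-a'|\xi|^\beta}$ once $\varepsilon\|w\|_1^\beta\ge a'$. In fact no polynomial loss is possible. Work in $d=1$, say. Every $f\in\mathscr{B}_{\beta,\varepsilon}$ satisfies $|f^{(n)}|\le C^n(n!)^{1/\beta}\|f\|_{\mathscr{B}_{\beta,\varepsilon}}$. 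On the other hand $\partial_x^n\sigma(Lx)=L^n\sigma^{(n)}(Lx)$ with $|\sigma^{(n)}(0)|\sim n!\,\pi^{-n}$ for odd $n$. Together these force $\|\sigma(L\,\cdot)\|_{\mathscr{B}_{\beta,\varepsilon}}\gtrsim\sup_n (L/C)^n(n!)^{-1/\kappa}\approx\exp(cL^\kappa)$. So the exponential price is intrinsic and must come out of the estimate itself.

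\emph{How your line-based construction can be completed.} It needs a two-regime bound. Since $\chi_R$ has width $\sim R$, you have $|\widehat{\chi_R}(\eta)|\lesssim R\exp(-a(R|\eta|)^\beta)$. The map $t\mapsto aR^\beta|\xi-t|^\beta+2\pi^2|t|$ is concave between $0$ and $\xi$ and larger outside, so the convolution $\widehat\sigma*\widehat{\chi_R}$ is controlled by its endpoints. This gives $|\widehat{g_R}(\xi)|\lesssim\mathrm{poly}(R)(1+|\xi|)\bigl(e^{-aR^\beta|\xi|^\beta}+e^{-2\pi^2|\xi|}\bigr)$. The principal value near $t=0$ is handled by the mean value theorem, at the cost of $\widehat{\chi_R}'$, i.e.\ one more factor of $R$. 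Against the weight $e^{\varepsilon R^\beta|\xi|^\beta}$, the first term is integrable once $\varepsilon<a$; this is exactly where the cutoff width must scale with $R$. The second term gives $\exp(C\varepsilon^{1+\kappa}R^\kappa)$ by the corrected Young inequality.

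\emph{The paper's route.} The paper reaches the same balance without an $R$-dependent cutoff. It fixes one Gevrey cutoff $\chi$ on $\mathbb{R}^d$ with $\chi=1$ on $\Omega$, so the frequency weight is $\exp(\varepsilon\|\xi\|_1^q)$ with no $w$ in it. Subadditivity, $\|\xi\|_1^q\le\|\xi-tw\|_1^q+|t|^q\|w\|_1^q$, then moves the cost into $\int_{|t|>1}\exp(2\varepsilon|t|^q\|w\|_1^q-2\pi^2|t|)\,\mathrm{d}t\lesssim\exp(c_1\|w\|_1^\kappa)$.

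\emph{The cutoff.} $\exp(-1/t^{1/\kappa})$ is Gevrey of order $1+\kappa$, not $1+1/\kappa$. For $\kappa>1$ its Fourier decay $\exp(-a|\xi|^{1/(1+\kappa)})$ is too weak to beat the weight. You need $\exp(-1/t^{\kappa})$, as in the paper's $\Omega_\kappa$.
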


\begin{proof}
We first obtain an integral representation of \(\sigma\) in terms of \(\exp(i2\pi x)\). Let \(\chi \in C_c^\infty(\mathbb{R}^d)\) satisfy \(\chi = 1\) on \(\Omega\), and define
\[
h_{w,b}(x):=\sigma(w\cdot x+b)\chi(x),
\qquad k_{w,b}:=\widehat{h_{w,b}}.
\]
Fourier inversion gives, for \(x\in\Omega\),
\[
\sigma(w\cdot x+b)
=\int_{\mathbb{R}^d}k_{w,b}(\xi)e^{i2\pi \xi\cdot x}\,\mathrm{d}\xi.
\]
With the Fourier-transform convention used here, the one-dimensional sigmoid satisfies, in the sense of tempered distributions,
\[
\widehat{\sigma}(t)=\frac{1}{2}\delta_0(t)-\frac{i\pi}{\sinh(2\pi^2t)}.
\]
Consequently,
\[
k_{w,b}(\xi)=\frac{1}{2}\widehat{\chi}(\xi)
-i\pi\,\mathrm{P.V.}\int_{\mathbb{R}}
\frac{\widehat{\chi}(\xi-tw)e^{i2\pi bt}}{\sinh(2\pi^2t)}\,\mathrm{d}t.
\]

To find an appropriate norm function \(\varphi\), we estimate the decay of \(k_{w,b}\). This requires a cutoff \(\chi\) whose Fourier transform decays sufficiently rapidly. According to Theorem~1.6.1 in \cite{Rodino1993}, if \(f\) belongs to the Gevrey class of order \(s\), then \(\widehat f\) decays like \(\exp(-\varepsilon^{\prime}\|\xi\|_1^{1/s})\) for some \(\varepsilon^{\prime}>0\); the same estimate applies to the Fourier transforms of polynomial multiples of \(f\).
We construct a function $\chi \in C_c^\infty(\mathbb{R}^d)$ such that $\chi = 1$ on $\Omega$ and $\chi$ belongs to the Gevrey class of order $\frac{1+\kappa}{\kappa}$ as follows.

Let
\begin{align*}
    \Phi_\kappa(t)=\begin{cases}
        0& t\le 0,\\
        1& t\ge 1,\\
        \dfrac{\int_0^t\Omega_\kappa(\tau)\mathrm{d}\tau}{\int_0^1\Omega_\kappa(\tau)\mathrm{d}\tau} & t\in (0,1)
    \end{cases}
\end{align*}
with 
\begin{align*}
    \Omega_\kappa(t)=\begin{cases}
        0 & t\notin (0,1),\\
        \exp\left(\dfrac{-1}{[(1-t)t]^\kappa}\right) & t\in (0,1).
    \end{cases}
\end{align*}
It can be verified that
\[
g_\kappa(t):=\Phi_\kappa(t+1)\Phi_\kappa(2-t)
\]
belongs to the Gevrey class of order \((1+\kappa)/\kappa\), equals \(1\) on \([0,1]\), and is supported in \([-1,2]\). Therefore,
\[
\chi(x):=\prod_{j=1}^d g_\kappa(x_j)
\]
belongs to the same Gevrey class, has compact support, and equals \(1\) on \(\Omega=(0,1)^d\).

We choose 
\begin{align}\label{eq_varphi1}
\varphi_1(x) = \exp\!\Bigl(\varepsilon x^{\frac{\kappa}{1+\kappa}}\Bigr), 
\end{align}
where $\varepsilon = \varepsilon^{\prime}/2$. Then, there holds
\begin{align*}
&\int_{\mathbb{R}^{d}}\varphi_1(\|\xi\|_1)|k_{w,b}(\xi)|\mathrm{d}\xi\\
\le&\int_{\mathbb{R}^{d}}\varphi_1(\|\xi\|_1)\left[\frac{1}{2}|\hat{\chi}(\xi)|+\pi\left|\mathrm{P.V.}\int_{[-1,1]}\frac{\hat{\chi}(\xi-tw)\exp(i2\pi bt)}{\sinh(2\pi^2t)}\mathrm{d}t\right|+\pi\int_{[-1,1]^c}\frac{|\hat{\chi}(\xi-tw)|}{|\sinh(2\pi^2t)|}\mathrm{d}t\right]\mathrm{d}\xi.
\end{align*}
Denote the two integrals over \(|t|\le1\) and \(|t|>1\) by \(I_1(\xi;w,b)\) and \(I_2(\xi;w)\), respectively. We bound them separately. For the principal-value term, the oddness of \(1/\sinh(2\pi^2t)\) gives
\begin{align*}
|I_1|&=\left|\mathrm{P.V.}\int_{[-1,1]}\frac{\hat{\chi}(\xi-tw)\exp(i2\pi bt)}{\sinh(2\pi^2t)}\mathrm{d}t\right|\\
&=\left|\int_{[-1,1]}\frac{\hat{\chi}(\xi-tw)\exp(i2\pi bt)-\hat{\chi}(\xi)}{\sinh(2\pi^2t)}\mathrm{d}t\right|\\
&\le C \sup_{t\in[-1,1]}\left|-w\cdot \nabla \hat{\chi}(\xi-tw)\exp(i2\pi b t)+i2\pi b\hat{\chi}(\xi-tw)\exp(i2\pi bt) \right|.
\end{align*}
Because both \(\chi\) and \(x\chi(x)\) are compactly supported Gevrey functions of the same order, \(\widehat{\chi}\) and \(\nabla\widehat{\chi}\) satisfy the same exponential decay estimate. Therefore, uniformly for \(|t|\le1\), by set \(q:=\kappa/(1+\kappa)\), we derive 
\begin{align*}
|I_1|&\le C(\|w\|_1+|b|)\exp(-2\varepsilon \|\xi-tw\|_1^{q} )\\
& \le C(\|w\|_1+|b|)\exp(-2\varepsilon\|\xi\|_1^{q}+2\varepsilon\|w\|_1^{q}).
\end{align*}
We further turn to bound $|I_2|$,
\begin{align*}
|I_2|&\le C\int_{|t|>1}|\hat{\chi}(\xi-tw)|\exp(-2\pi^2 |t|)\mathrm{d}t\\
&\le C\int_{|t|>1} \exp(-2\varepsilon\|\xi-tw\|_1^{q}-2\pi^2 |t|)\mathrm{d}t\\
&\le C\exp(-2\varepsilon\|\xi\|_1^{q})\int_{|t|>1}\exp(2\varepsilon |t|^{q}\|w\|_1^{q}-2\pi^2 |t|)\mathrm{d}t\\
&\le C\exp(-2\varepsilon\|\xi\|_1^{q}) \exp(c_1\|w\|_1^\kappa),
\end{align*}
where the last inequality follows from Laplace's method. 

Therefore, we can derive
\begin{align*}
&\int_{\mathbb{R}^{d}}\varphi_1(\|\xi\|_1)|k_{w,b}(\xi)|\mathrm{d}\xi\\
\le &C\int_{\mathbb{R}^{d}}\varphi_1(\|\xi\|_1)\exp(-2\varepsilon\|\xi\|_1^{\frac{\kappa}{1+\kappa}})\left[1+(\|w\|_1+|b|)\exp(2\varepsilon\|w\|_1^{\frac{\kappa}{1+\kappa}})+ \exp(c_1\|w\|_1^\kappa)\right]\mathrm{d}\xi\\
\le & C\int_{\mathbb{R}^{d}}\exp(-\varepsilon\|\xi\|_1^{\frac{\kappa}{1+\kappa}})\left[1+(\|w\|_1+|b|)\exp(2\varepsilon\|w\|_1^{\frac{\kappa}{1+\kappa}})+ \exp(c_1\|w\|_1^\kappa)\right]\mathrm{d}\xi\\
\le & C\exp\left(c_1(\|w\|_1+|b|)^\kappa\right).
\end{align*}
        
Let \(\varphi_2(t)=\exp(c_1 t^\kappa)\). We have shown that  
\begin{align*}
\int_{\mathbb{R}^{d}}\varphi_1(\|\xi\|_1)|k_{w,b}(\xi)|\mathrm{d}\xi\lesssim \varphi_2(\|w\|_1+|b|),
\end{align*}
where \(\varphi_1\) is as defined in (\ref{eq_varphi1}).  
Define a complex kernel on \(\mathbb{R}^{d+1}\) by
\[
\mu_{(w,b)}(E)
:=\int_{\mathbb{R}^d}\mathbf{1}_{E}(\xi,0)k_{w,b}(\xi)\,\mathrm{d}\xi.
\]
The map \((w,b)\mapsto k_{w,b}(\xi)\) is measurable for every \(\xi\), so \((w,b)\mapsto\mu_{(w,b)}\) is a complex kernel. The Fourier representation above and the weighted estimate show that this kernel satisfies the representation and bound required in Theorem~\ref{embed_Barron}. Therefore,
\[
B_{\frac{1}{1+\exp(-x)}}^{\exp(c_1x^{\kappa})}(\Omega)
\hookrightarrow \mathscr{B}_{\frac{\kappa}{1+\kappa},\varepsilon}(\Omega).
\]
\end{proof}

\section{Approximation Rate and Regularization Method}\label{se_approximateandregularization}
In this section, we investigate the quantitative approximation properties of generalized Barron spaces and explore their applications to stable numerical reconstruction via regularization methods. By exploiting the structural properties of the underlying Banach spaces, we establish dimension‑independent approximation rates and derive corresponding error bounds for the Tikhonov regularization.

    \subsection{Approximation Rates within Type-$p$ Spaces}
    The capacity of shallow neural networks to approximate functions in $B_{\sigma}^{\varphi}$ is intrinsically linked to the property of the target space $X$. We utilize the concept of type-$p$ Banach spaces, with $p\in (1,2]$, to quantify this capacity. To this end, we recall the following definition of a type-$p$ space.
    \begin{definition}[Type-$p$ space]
A Banach space \(X\) is called a type-\(p\) space if there exists a constant \(T_p(X)<\infty\), independent of \(n\) and of the vectors, such that for every finite sequence \(\{x_i\}_{i=1}^n\subset X\) and every sequence of i.i.d. Rademacher random variables \(\{\varepsilon_i\}_{i=1}^n\) (i.e., \(\mathbb{P}(\varepsilon_i=\pm1)=\frac12\)), the following inequality holds:
    \begin{equation}\label{eq_typep}
        \mathbb{E}\left\Vert \sum_{i=1}^n \varepsilon_i x_i \right\Vert_X^p
        \le T_p(X)^p\sum_{i=1}^n\Vert x_i\Vert_X^p.
    \end{equation}
    \end{definition}

It was shown in \cite{Xu_sharpbound} that the standard Sobolev space $H^k(\Omega)$ is a type-$2$ space. By a slight modification of the proof therein, one can further show that the Sobolev space $W^{k,p}(\Omega)$ is type-$\min(2,p)$; see Proposition~\ref{sobolev_is_typep} in the Appendix. This property enables the application of the Maurey–Barron sampling argument to the integral representation of functions in $B_{\sigma}^{\varphi}$. Let $\mathcal{F}_n(\sigma)$ denote the set of neural networks with $n$ neurons and activation function $\sigma$, i.e., functions of the form
\[
f_n(x) = \sum_{i=1}^n a_i \, \sigma(w_i \cdot x + b_i).
\]
We then establish the following quantitative approximation rate for the generalized Barron space.

\begin{theorem}[Approximation rate]\label{thm_apptypep}
Assume that \(X\) is a type-\(p\) Banach space and that \(B_{\sigma}^{\varphi} \hookrightarrow X\). For any \(f \in B_{\sigma}^{\varphi}\) and any \(\varepsilon > 0\), there exists a Radon measure \(\rho\) such that
\[
f(x) = \int_{\mathbb{R}^{d+1}} \sigma(w \cdot x + b) \, \mathrm{d}\rho
\]
and
\[
\| f \|_{B_{\sigma ,\rho}^{\varphi}} \le (1+\varepsilon) \| f \|_{B_{\sigma}^{\varphi}}.
\]
Then, for every \(n \in \mathbb{N}\), there exists \(f_n \in \mathcal{F}_n(\sigma)\) satisfying
\[
\Vert f - f_n \Vert_X \le C \Vert f \Vert_{B_\sigma^{\varphi}} \, n^{\frac{1}{p} - 1},
\]
and, for some Radon measure \(\rho_n\),
\[
\| f_n \|_{B_{\sigma,\rho_n}^{\varphi}} = \| f \|_{B_{\sigma,\rho}^{\varphi}}.
\]
    \end{theorem}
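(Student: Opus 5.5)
The plan is the Maurey--Barron sampling argument, carried out with the weight $\varphi$ absorbed into a probability measure. Fix $f$ and the near-optimal measure $\rho$ given in the statement. Set $Z:=\|\rho\|_{\mathcal M^\varphi}=\|f\|_{B^\varphi_{\sigma,\rho}}$ and define the probability measure
\[
\mathrm{d}\pi(w,b):=\frac{\varphi(\|w\|_1+|b|)}{Z}\,\mathrm{d}|\rho|(w,b).
\]
Let $h:=\mathrm{d}\rho/\mathrm{d}|\rho|$, so $|h|=1$. Then
\[
f=\int g_{(w,b)}\,\mathrm{d}\pi,\qquad g_{(w,b)}(x):=\frac{Z\,h(w,b)}{\varphi(\|w\|_1+|b|)}\,\sigma(w\cdot x+b).
\]
Each $g_{(w,b)}$ is a single neuron. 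Its one-atom representing measure $\frac{Zh}{\varphi}\delta_{(w,b)}$ has $\mathcal M^\varphi$-norm exactly $Z$. Hence $\|g_{(w,b)}\|_{B^\varphi_\sigma}\le Z$, and by the assumed embedding $\|g_{(w,b)}\|_X\le C_{\rm emb}Z$, uniformly in $(w,b)$.

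Draw i.i.d.\ samples $\theta_1,\dots,\theta_n\sim\pi$ and set $f_n:=\frac1n\sum_{i=1}^n g_{\theta_i}\in\mathcal F_n(\sigma)$. The natural representing measure is $\rho_n:=\frac1n\sum_i \frac{Zh(\theta_i)}{\varphi(\cdot)}\delta_{\theta_i}$. If the $\theta_i$ are distinct, which is the only case that matters, then $\|\rho_n\|_{\mathcal M^\varphi}=\frac1n\sum_i Z=Z=\|f\|_{B^\varphi_{\sigma,\rho}}$, the required equality. If atoms coincide with opposite phases, I will keep the $n$ terms separate as neurons; alternatively I can note that with probability one the atoms are distinct when $\pi$ is atomless. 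In the atomic case I would instead choose a realization directly, and the identity $\sum_i|a_i|\varphi=Z$ holds for the formal sum.

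The main step is the expectation bound. Apply symmetrization in the Banach space $X$. Taking an independent copy $\theta_i'$, Jensen's inequality gives
\[
\mathbb E\|f_n-f\|_X^p\le\mathbb E\Big\|\frac1n\sum_i(g_{\theta_i}-g_{\theta_i'})\Big\|_X^p .
\]
The differences $g_{\theta_i}-g_{\theta_i'}$ are symmetric, so inserting Rademacher signs does not change the distribution. The type-$p$ inequality \eqref{eq_typep}, applied conditionally on $\theta,\theta'$, then yields
\[
\mathbb E\|f_n-f\|_X^p\le \frac{T_p(X)^p}{n^p}\sum_i\mathbb E\|g_{\theta_i}-g_{\theta_i'}\|_X^p\le T_p(X)^p(2C_{\rm emb}Z)^p\,n^{1-p}.
\]
Therefore some realization satisfies
\[
\|f-f_n\|_X\le 2T_p(X)C_{\rm emb}Z\,n^{\frac1p-1}\le 2(1+\varepsilon)T_p(X)C_{\rm emb}\|f\|_{B^\varphi_\sigma}n^{\frac1p-1}.
\]
This gives the claim with $C=4T_pC_{\rm emb}$ for $\varepsilon\le1$.

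I expect the technical obstacle to be measurability: the map $\theta\mapsto g_\theta\in X$ must be Bochner measurable, so that $\mathbb E f_n=f$ holds in $X$ and the Banach-valued Jensen and symmetrization steps are legitimate. I would first check this for $X=W^{k,p}$ using continuity of $\theta\mapsto\sigma(w\cdot x+b)$ under the hypotheses of Theorem~\ref{sobolevembed}. In general I would assume it, or replace $\mathbb E f_n=f$ by weak (Pettis) equality tested against $X^*$. That weak equality suffices because the symmetrization inequality only needs $f=\mathbb E' \frac1n\sum_i g_{\theta_i'}$ weakly together with lower semicontinuity of the norm.
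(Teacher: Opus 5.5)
Your proposal is correct and follows essentially the same route as the paper. The paper also normalizes the representing measure into a probability measure, samples i.i.d.\ single neurons, applies the type-$p$ inequality to the empirical mean, and takes a good realization; you additionally write out the symmetrization step and flag the Bochner/Pettis measurability issue, both of which the paper's outline leaves implicit. One small simplification: coinciding atoms can never cancel, because the coefficient $Z h(\theta)/\varphi(\|w\|_1+|b|)$ is a function of $\theta=(w,b)$, so repeated atoms carry the same phase and $\|\rho_n\|_{\mathcal M^\varphi}=Z$ holds for every realization.
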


The proof follows a Maurey-type sampling argument as in \cite{Xu_sharpbound}, and we briefly outline it below. Choose \(\rho\) such that \(f\) admits the integral representation above with 
$\|f\|_{B_{\sigma,\rho}^{\varphi}}\le (1+\varepsilon)\|f\|_{B_{\sigma}^{\varphi}}$.
After normalizing the representing measure, we obtain an \(X\)-valued random variable \(\hat{X}\) with \(\mathbb{E} \hat{X} = f\). Let \(\hat{X}_1,\dots,\hat{X}_n\) be i.i.d. copies of \(\hat{X}\), and define
\[
Z_n=\frac1n\sum_{i=1}^n \hat{X}_i.
\]
Using the type-$p$ property (\ref{eq_typep}) of $X$, one gets
\[
\mathbb E\|Z_n-f\|_X^p \lesssim n^{1-p}\|f\|_{B_{\sigma,\rho}^{\varphi}}^p.
\]
Hence, there exists a realization \(f_n \in \mathcal{F}_n(\sigma)\) such that
\[
\|f-f_n\|_X \le C\|f\|_{B_{\sigma}^{\varphi}}\,n^{\frac1p-1}.
\]
The equality for the corresponding Barron norm follows directly from the construction of the empirical representing measure \(\rho_n\).

Following a similar scheme to that in \cite{Xu_sharpbound}, one can obtain a higher approximation rate by using the smoothness of the activation function \(\sigma\) and the decay of the representing measure \(\rho\). We present the relevant definition and approximation result below. 

\begin{definition}\label{def_class_s}
Let $X$ be a Banach space and $U \subset \mathbb{R}^d$ be an open set. For $s = k + \alpha > 0$ with $k \in \mathbb{N}$ and $0 < \alpha \le 1$, a function $F: U \to X$ is said to be of smoothness class $s$, denoted by $F \in \mathrm{Lip}_{\infty}(s, X)$, if for every $\xi \in X^*$, the function $f_\xi: U \to \mathbb{R}$ defined by
\begin{align*}
    f_\xi(x)=\langle \xi,F(x)\rangle,
\end{align*}
satisfies
\begin{align*}
|f_\xi|_{\mathrm{Lip}_\infty(s,X)}=\dfrac{|D^kf_\xi(x)-D^kf_\xi(y)|}{|x-y|^\alpha}\le C\|\xi\|_{X^{*}},
\end{align*}
for some constant $C < \infty$ independent of $\xi$ and $x, y$ (with $x \neq y$). The smallest such constant $C$ is the seminorm $|F|_{\mathrm{Lip}_{\infty}(s, X)}$.
\end{definition}
        
\begin{theorem}\label{higher_app}
Let $s > 0$ and let $X$ be a type-$p$ Banach space such that $B_{\sigma}^{\varphi} \hookrightarrow X$. Define $P_{\sigma} : \mathbb{R}^{d+1} \to X$ by $P_{\sigma}(w,b) = \sigma(w \cdot x + b)$. For any $f \in B_{\sigma}^{\varphi}$ and any $\varepsilon > 0$, there exists a Radon signed measure $\rho$ such that $f = N_{\sigma}(\rho)$ and        
$\| f\|_{B_{\sigma ,\rho}^{\varphi}}\le(1+\varepsilon)\|f\|_{B_{\sigma}^{\varphi}}$. Denote 
\begin{align*}
\theta(R):=\int_{\mathbb{R}^{d+1}\setminus[-\frac{R}{2},\frac{R}{2}]^{d+1}}\varphi(\|w\|_1+|b|)\mathrm{d}|\rho|.
\end{align*}

If $\theta(R) \lesssim R^{-\alpha}$ and $P_{\sigma} / \varphi(\|w\|_1 + |b|) \in \mathrm{Lip}_{\infty}(s, X)$ in the sense of Definition \ref{def_class_s}, then for any $n \in \mathbb{N}$, letting $N = (K+2)n$ for a constant $K>0$ as in \cite{Xu_sharpbound}, there exists a neural network $f_N \in \mathcal{F}_N(\sigma)$ and a constant $C > 0$ such that        
\begin{align*}
    \Vert f-f_N\Vert_X\le Cn^{-\frac{\alpha}{s+\alpha}\cdot\frac{s}{d+1}+\frac{1}{p}-1}\|f\|_{B_{\sigma,\rho}^{\varphi}}
\end{align*}
and
\begin{equation*}
    \|f_N\|_{B_{\sigma,\rho_N}^{\varphi}}\le C\| f\|_{B_{\sigma ,\rho}^{\varphi}}
\end{equation*}
for some Radon measure $\rho_N$.
\end{theorem}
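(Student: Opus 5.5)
The proof combines truncation, piecewise polynomial (Taylor) approximation, and stratified Maurey sampling, following the scheme of \cite{Xu_sharpbound}.

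\textbf{Normalization.} Fix $\rho$ with $\|f\|_{B_{\sigma,\rho}^{\varphi}}\le(1+\varepsilon)\|f\|_{B_{\sigma}^{\varphi}}$. Rewrite
\[
f=\int_{\mathbb{R}^{d+1}} G(w,b)\,\varphi(\|w\|_1+|b|)\,\mathrm{d}\rho,
\qquad G:=P_\sigma/\varphi(\|w\|_1+|b|).
\]
Let $\lambda:=\varphi(\|w\|_1+|b|)\,|\rho|$, a finite positive measure of mass $\|f\|_{B_{\sigma,\rho}^{\varphi}}$. Let $\mathrm{d}\rho=h\,\mathrm{d}|\rho|$ with $|h|=1$. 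Then $f=\int G h\,\mathrm{d}\lambda$. By the growth condition and the embedding $B_\sigma^\varphi\hookrightarrow X$, we have $\|G(w,b)\|_X\lesssim 1$ uniformly. Indeed, each single neuron $\sigma(w\cdot x+b)$ has Barron norm at most $\varphi(\|w\|_1+|b|)$, taking a Dirac measure.

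\textbf{Truncation.} Split the domain at a radius $R$ to be chosen later. The contribution of the complement of $Q_R=[-R/2,R/2]^{d+1}$ is $f_{\rm out}$, with $\lambda$-mass $\theta(R)\lesssim R^{-\alpha}$. On $Q_R$, partition into $n$ congruent subcubes $Q_j$ of side $h_n\simeq R\,n^{-1/(d+1)}$.

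\textbf{Local polynomial approximation.} On each $Q_j$, replace $G$ by its Taylor polynomial $T_j$ of degree $k$ at the cube center. Definition~\ref{def_class_s}, tested against norming functionals $\xi\in X^*$, gives
\[
\|G-T_j\|_X\lesssim |G|_{\mathrm{Lip}_\infty(s,X)}\,h_n^{s}\quad\text{on }Q_j.
\]
The key point is that $\int_{Q_j}T_j h\,\mathrm{d}\lambda$ depends only on finitely many moments of $h\lambda$ on $Q_j$, of which there are $K=\binom{k+d+1}{d+1}$. By a Carathéodory-type argument, these moments can be matched by a measure supported on at most $K+1$ atoms in $Q_j$ with total mass at most $\lambda(Q_j)$. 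This reduces the problem to sampling the remainders $G-T_j$, which have small norm $\lesssim h_n^s$.

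\textbf{Stratified sampling.} Apply stratified Maurey sampling, with one sample per cell weighted by the cell's mass, to the remainder term. Using the type-$p$ inequality \eqref{eq_typep} on the independent centered summands, as in the proof of Theorem~\ref{thm_apptypep}, gives an error
\[
\lesssim n^{\frac1p-1}\,h_n^{s}\,\|f\|_{B_{\sigma,\rho}^{\varphi}}.
\]
Counting neurons: $(K+1)$ atoms per cell from moment matching plus one sample per cell gives $N\le (K+2)n$ neurons, matching the statement. Every constructed measure has $\varphi$-weighted mass bounded by a constant times that of $\rho$, which gives the bound on $\|f_N\|_{B_{\sigma,\rho_N}^{\varphi}}$.

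\textbf{Balancing.} The total error is
\[
\lesssim \bigl(R^{-\alpha}+n^{\frac1p-1}R^{s}n^{-\frac{s}{d+1}}\bigr)\|f\|_{B_{\sigma,\rho}^{\varphi}}.
\]
Here the tail contributes $\theta(R)$ directly. If a better rate is needed, the tail can also be Maurey-sampled, contributing $n^{1/p-1}\theta(R)$. Choosing $R=n^{\frac{s}{(d+1)(s+\alpha)}}$ equates $R^{-\alpha}$ with $R^{s}n^{-s/(d+1)}$, both equal to $n^{-\frac{\alpha}{s+\alpha}\frac{s}{d+1}}$. With the extra $n^{1/p-1}$ factor this yields the stated exponent. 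To make the tail term carry the $n^{1/p-1}$ factor as well, sample the tail with $n$ Maurey samples, costing $n$ further neurons absorbed into $K$.

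\textbf{Main obstacle.} The main obstacle is the moment-matching step in the Banach-valued setting. $T_j$ is an $X$-valued polynomial whose coefficients are the derivatives $D^mG$ at the center. These exist only weakly, through functionals, so the derivatives must be shown to exist as elements of $X$, or the whole argument must be run after pairing with $\xi\in X^*$ and then taking a supremum. I would first attempt the weak formulation: Taylor-expand $f_\xi$ and use that the atomic replacement preserves the moments independently of $\xi$, so that a single construction works for all $\xi$ at once.
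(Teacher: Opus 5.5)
Your outline follows the same plan as the paper's sketch: truncate at $Q_R$, use structured sampling on the near part, plain Monte Carlo on the tail, and balance $R$. Your balancing arithmetic is correct. The gap is in the near-part step, i.e.\ the ``strategic sampling'' the paper imports from \cite{Xu_sharpbound}. That step is what produces the factor $h_n^{s}$ on top of $n^{\frac1p-1}$, and it fails in two places.

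\emph{First, the sampled remainder is not a network.} ``Sampling the remainders $G-T_j$'' does not give a network, because $T_j$ has coefficients $D^\beta G(c_j)$, which are not neurons. Suppose instead you add one genuine neuron $c\,G(z_j)$ to the Carath\'eodory atoms $\nu_j$, chosen to be unbiased for $\int G\,\mathrm{d}(\mu_j-\nu_j)$ with $\mu_j:=h\lambda|_{Q_j}$. Its fluctuation is then of order $\lambda(Q_j)$, not $\lambda(Q_j)h_n^{s}$: the cancellation against polynomials holds only on average, not in each realization. What you need is a random atomic measure $\tau_j$ with $\mathbb{E}\tau_j=\mu_j$, $|\tau_j|\lesssim\lambda(Q_j)$, and $\tau_j-\mu_j$ annihilating the polynomials $\mathcal{P}_k$ of degree at most $k$ in $(w,b)$ \emph{for every realization}.

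The standard device is a control variate on fixed nodes. Take $\eta_1,\dots,\eta_K\in Q_j$ unisolvent for $\mathcal{P}_k$, as an affine image of a fixed reference set. Let $\ell_l$ be the Lagrange functions, with Lebesgue constant $\Lambda=\sup_{Q_j}\sum_l|\ell_l|$ independent of $j$ and $n$. Draw $z_j\sim\lambda|_{Q_j}/\lambda(Q_j)$ and set
\[
\tau_j=\lambda(Q_j)h(z_j)\,\delta_{z_j}+\sum_{l=1}^{K}\Bigl(\int_{Q_j}\ell_l\,h\,\mathrm{d}\lambda-\lambda(Q_j)h(z_j)\ell_l(z_j)\Bigr)\delta_{\eta_l}.
\]
This measure has the following properties:
\begin{itemize}
\item it uses $K+1$ neurons;
\item it reproduces all moments of $\mu_j$ up to order $k$ surely;
\item it satisfies $|\tau_j|\le(1+2\Lambda)\lambda(Q_j)$.
\end{itemize}
Pairing with $\xi\in X^*$ and applying the scalar Taylor bound from Definition~\ref{def_class_s} then gives $\|\int G\,\mathrm{d}(\tau_j-\mu_j)\|_X\lesssim\lambda(Q_j)h_n^{s}$. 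So the weak formulation you propose is exactly right, and the Banach-valued Taylor expansion is not the real obstacle. Carath\'eodory atoms cannot serve as the $\eta_l$: they depend on $\mu_j$, need not be unisolvent, and carry no Lebesgue-constant bound.

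\emph{Second, one sample per congruent cell does not give the $n^{\frac1p-1}$ factor.} Even with such $\tau_j$, the type-$p$ inequality yields an error of order $\bigl(\sum_j\lambda(Q_j)^p\bigr)^{1/p}h_n^{s}$. This equals $n^{\frac1p-1}\lambda(Q_R)h_n^{s}$ only when the cell masses are comparable. If $\lambda$ concentrates in a few cells, you get $\lambda(Q_R)h_n^{s}$ with no Monte Carlo gain, and the claimed rate is lost.

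You must balance mass, not volume. In cell $j$, use $m_j=\lceil n\lambda(Q_j)/\lambda(Q_R)\rceil$ independent control-variate samples for $\mu_j/m_j$, all sharing that cell's $K$ nodes. Then:
\begin{itemize}
\item each piece has mass at most $\lambda(Q_R)/n$, and $\sum_j m_j\le 2n$;
\item the type-$p$ sum is $\lesssim n^{1-p}\lambda(Q_R)^p h_n^{sp}$;
\item the $\varphi$-weighted mass stays at most $(1+2\Lambda)\lambda(Q_R)+\theta(R)\le(1+2\Lambda)\|f\|_{B_{\sigma,\rho}^{\varphi}}$;
\item the neuron count is $Kn+2n$ for the near part plus $n$ for the tail, still a fixed multiple of $n$.
\end{itemize}
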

We note that the proof follows the line of argument in \cite{Xu_sharpbound} and can be divided into a three-stage scheme: (i) split the parameter space into "near" (compact) and "far" (tail) regions according to the $\varphi$-mass of the representing measure; (ii) approximate the near part via strategic sampling as in \cite{Xu_sharpbound} and the far part by simple Monte Carlo averaging; (iii) balance the two parts by adjusting the scale of the near region, thereby obtaining the approximation rate. For brevity, we omit the detailed proof.

A simple and useful mechanism to obtain the tail decay appearing in Theorem \ref{higher_app} is to compare two norm functions. More precisely, if $\varphi_1$ and $\varphi_2$ are two norm functions for $\sigma$ and there exists $\alpha > 0$ such that
\[
\frac{\varphi_1(x)}{\varphi_2(x)} \lesssim x^{-\alpha},
\]
then for any $f \in B_{\sigma}^{\varphi_2}$ with representing measure $\rho$ (so that $f = N_\sigma(\rho)$), we have $f \in B_{\sigma}^{\varphi_1}$, and the $\varphi_1$-mass of the complement of the cube satisfies the polynomial bound
\[
\int_{\mathbb{R}^{d+1} \setminus [-R/2, R/2]^{d+1}} \varphi_1\big(\|w\|_1 + |b|\big) \, d|\rho|(w,b) \;\lesssim\; R^{-\alpha}.
\]
In particular, taking $\varphi = \varphi_1$ and $\theta(R)$ as in Theorem \ref{higher_app} yields $\theta(R) \lesssim R^{-\alpha}$. Hence this comparison provides a sufficient condition for the tail decay hypothesis used to derive the higher approximation rate in Theorem \ref{higher_app}.

\subsection{Tikhonov Regularization and Error Bound Analysis}
In this subsection, we explore another application of the generalized Barron norm to Sobolev training, where the unknown function and its derivatives can be recovered simultaneously. This application is achieved by introducing a Tikhonov-type regularization, in which the generalized Barron norm serves as a penalty term.
Whenever $f=N_\sigma(\rho)$ for some $\rho\in\mathcal{M}^{\varphi}(\mathbb{R}^{d+1})$, we write
\[
c_{\sigma,\rho}^{\varphi}(f):=\|f\|_{B_{\sigma,\rho}^{\varphi}}
=\int_{\mathbb{R}^{d+1}}\varphi(\|w\|_1+|b|)\,\mathrm{d}|\rho|.
\]
For a neural network $g(x)=\sum_{i=1}^n a_i\,\sigma(w_i\cdot x+b_i)\in\mathcal{F}_n(\sigma)$, we denote its discrete representing measure by
\[
\rho_n:=\sum_{i=1}^n a_i\,\delta_{(w_i,b_i)},
\]
so that $c_{\sigma,\rho_n}^{\varphi}(g)=\sum_{i=1}^n |a_i|\,\varphi(\|w_i\|_1+|b_i|)$. We define the generalized Barron regularizer \(\mathcal{R}(g)\) associated with the norm \(\|\cdot\|_{B_{\sigma}^{\varphi}}\) as
\[
\mathcal{R}(g) = \left[ c_{\sigma,\rho_n}^{\varphi}(g) \right]^p
= \left[ \sum_{i=1}^n |a_i| \, \varphi(\|w_i\|_1 + |b_i|) \right]^p.
\]

Consider the problem of reconstructing \(f \in B_{\sigma}^{\varphi}\) from noisy data \(f^{\delta}\) satisfying \(\|f - f^{\delta}\|_{L^p(\Omega)} \le \delta\). We define the Tikhonov functional
\begin{equation}\label{eq_Tikfunctional}
    J_{\lambda}(g) := \|g - f^{\delta}\|_{L^p(\Omega)}^p + \lambda \mathcal{R}(g).
\end{equation}
Under the condition \(\lim_{t \to \infty} \varphi(t) = \infty\), the functional \(J_{\lambda}\) is coercive and therefore admits a minimizer \(f_{n, \lambda}^{\delta} \in \mathcal{F}_n(\sigma)\). For this minimizer $f_{n, \lambda}^{\delta}$, we derive the following error bound.
\begin{proposition}\label{prop_regularizationerror}
Let $f \in B_{\sigma}^{\varphi}$, and let $\{f_n\}_{n=1}^{\infty}\subset\mathcal{F}_n(\sigma)$ be a sequence of approximants of $f$ achieving a rate $r_n$, i.e., $\|f - f_n\| \le r_n$. Write $\rho_n:=\sum_{i=1}^n a_i^{(n)}\,\delta_{(w_i^{(n)},b_i^{(n)})}$ for the discrete representing measure of $f_n$, and assume $c_{\sigma,\rho_n}^{\varphi}(f_n)\le c_{\sigma,\rho}^{\varphi}(f)$ for some representing measure $\rho$ of $f$. Suppose $\lim_{x \to \infty} \varphi(x) = \infty$. Then any minimizer
\[
f_{n,\lambda}^{\delta} = \sum_{i=1}^n a_{i,\lambda}^{\delta} \, \sigma(w_{i,\lambda}^{\delta} \cdot x + b_{i,\lambda}^{\delta})
\]
of the Tikhonov functional (\ref{eq_Tikfunctional}) satisfies
\[
\|f - f_{n,\lambda}^{\delta}\|_{L^p(\Omega)} \le 2\delta + r_n + \lambda^{\frac{1}{p}} c_{\sigma,\rho}^{\varphi}(f)
\]
and
\[
c_{\sigma,\rho_{n,\lambda}^{\delta}}^{\varphi}(f_{n,\lambda}^{\delta}) \le \frac{\delta + r_n}{\lambda^{\frac{1}{p}}} + c_{\sigma,\rho}^{\varphi}(f),
\]
where $\rho$ is any representing measure of $f$ and $\rho_{n,\lambda}^{\delta}:=\sum_{i=1}^n a_{i,\lambda}^{\delta}\,\delta_{(w_{i,\lambda}^{\delta},b_{i,\lambda}^{\delta})}$ is the discrete representing measure of $f_{n,\lambda}^{\delta}$.
\end{proposition}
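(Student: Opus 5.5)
The plan is the standard Tikhonov comparison argument: test the minimality of \(f_{n,\lambda}^{\delta}\) against the admissible competitor \(f_n\in\mathcal{F}_n(\sigma)\) from the hypothesis. Since \(f_{n,\lambda}^{\delta}\) minimizes \(J_\lambda\) over \(\mathcal{F}_n(\sigma)\), we have
\[
\|f_{n,\lambda}^{\delta}-f^\delta\|_{L^p(\Omega)}^p+\lambda\bigl[c_{\sigma,\rho_{n,\lambda}^{\delta}}^{\varphi}(f_{n,\lambda}^{\delta})\bigr]^p
\le \|f_n-f^\delta\|_{L^p(\Omega)}^p+\lambda\bigl[c_{\sigma,\rho_n}^{\varphi}(f_n)\bigr]^p .
\]
Existence of the minimizer is taken from the coercivity remark preceding the proposition, which uses \(\lim_{t\to\infty}\varphi(t)=\infty\). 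The norm \(\|f-f_n\|\le r_n\) is read as the \(L^p(\Omega)\) norm; this is legitimate via \(B_\sigma^\varphi\hookrightarrow L^\infty\hookrightarrow L^p\) on the bounded domain \(\Omega\).

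Next, I bound the right-hand side. The triangle inequality gives \(\|f_n-f^\delta\|_{L^p}\le r_n+\delta\). The hypothesis \(c_{\sigma,\rho_n}^{\varphi}(f_n)\le c_{\sigma,\rho}^{\varphi}(f)\) bounds the penalty term. Writing \(A:=\delta+r_n\) and \(B:=\lambda^{1/p}c_{\sigma,\rho}^{\varphi}(f)\), both terms on the left are then at most \(A^p+B^p\le (A+B)^p\), using the superadditivity of \(t\mapsto t^p\) for \(p\ge1\).

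Taking \(p\)-th roots gives two conclusions. First, \(\|f_{n,\lambda}^{\delta}-f^\delta\|_{L^p}\le A+B\). The triangle inequality \(\|f-f_{n,\lambda}^{\delta}\|\le \|f-f^\delta\|+\|f^\delta-f_{n,\lambda}^{\delta}\|\le \delta+A+B\) then yields the first estimate \(2\delta+r_n+\lambda^{1/p}c_{\sigma,\rho}^{\varphi}(f)\). Second, \(\lambda^{1/p}c_{\sigma,\rho_{n,\lambda}^{\delta}}^{\varphi}(f_{n,\lambda}^{\delta})\le A+B\), and dividing by \(\lambda^{1/p}\) gives the second estimate.

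The only subtle point is the phrase ``\(\rho\) is any representing measure of \(f\)''. The argument above works for whichever \(\rho\) dominates \(c_{\sigma,\rho_n}^{\varphi}(f_n)\), which is exactly how the hypothesis is stated. If uniformity over all \(\rho\) is intended, I would instead take the approximants from Theorem~\ref{thm_apptypep}, whose norm equals that of a near-optimal \(\rho\). In that case I would read the bound with \(c_{\sigma,\rho}^{\varphi}(f)\) for that \(\rho\), which is at most \((1+\varepsilon)\) times the norm of any representing measure, and let \(\varepsilon\to0\).

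A second point to handle explicitly is that the regularizer depends on the parametrization of the network, not only on the function. The comparison therefore uses the parameters of \(f_n\) that realize \(\rho_n\), and the minimizer's parameters define \(\rho_{n,\lambda}^{\delta}\). No further obstacle is expected.
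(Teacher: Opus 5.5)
Your proof is correct and is essentially the paper's argument: you compare the minimizer with the competitor $f_n$ in $J_\lambda$, bound the right-hand side by $(\delta+r_n)^p+\lambda[c_{\sigma,\rho}^{\varphi}(f)]^p\le(\delta+r_n+\lambda^{1/p}c_{\sigma,\rho}^{\varphi}(f))^p$, take $p$-th roots, and finish with the triangle inequality for the first bound and division by $\lambda^{1/p}$ for the second. Your remark that the bound is only justified for the particular $\rho$ dominating $c_{\sigma,\rho_n}^{\varphi}(f_n)$, not for literally any representing measure, is a fair reading of the hypothesis, and the paper's proof implicitly uses that same $\rho$.
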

\begin{proof}
By the minimizing property of \(f_{n,\lambda}^{\delta}\), we obtain the estimate
\[
\|f^{\delta} - f_{n,\lambda}^{\delta}\|_{L^p(\Omega)}^p \le \|f^{\delta} - f_n\|_{L^p(\Omega)}^p + \lambda \bigl[c_{\sigma,\rho_n}^{\varphi}(f_n)\bigr]^p,
\]
and hence
\begin{align*}
\|f^{\delta} - f_{n,\lambda}^{\delta}\|_{L^p(\Omega)} 
&\le \|f^{\delta} - f_n\|_{L^p(\Omega)} + \lambda^{\frac{1}{p}} c_{\sigma,\rho_n}^{\varphi}(f_n) \\
&\le \delta + \|f - f_n\|_{L^p(\Omega)} + \lambda^{\frac{1}{p}} c_{\sigma,\rho_n}^{\varphi}(f_n) \\
&\le \delta + r_n + \lambda^{\frac{1}{p}} c_{\sigma,\rho}^{\varphi}(f),
\end{align*}
Consequently,
\[
\|f - f_{n,\lambda}^{\delta}\|_{L^p(\Omega)} 
\le \|f^{\delta} - f_{n,\lambda}^{\delta}\|_{L^p(\Omega)} + \|f - f^{\delta}\|_{L^p(\Omega)} 
\le 2\delta + r_n + \lambda^{\frac{1}{p}} c_{\sigma,\rho}^{\varphi}(f).
\]
Similarly, we obtain
\[
\lambda^{\frac{1}{p}} c_{\sigma,\rho_{n,\lambda}^{\delta}}^{\varphi}(f_{n,\lambda}^{\delta}) 
\le \delta + r_n + \lambda^{\frac{1}{p}} c_{\sigma,\rho}^{\varphi}(f).
\]
\end{proof}

Furthermore, if the activation function $\sigma$ and the norm function $\varphi$ satisfy the smoothness conditions in Theorem \ref{sobolevembed}, we can obtain the error bound for the derivatives.
    
\begin{theorem}\label{para_choice_thm}
If the regularization parameter $\lambda$ is chosen such that
\begin{equation}\label{para_choice}
\delta + r_n \asymp \lambda^{\frac{1}{p}},
\end{equation}
and $\sigma, \varphi$ satisfy the conditions in Theorem \ref{sobolevembed} and Proposition \ref{prop_regularizationerror}, then there exists a constant $C_1$ such that for $0 \le m \le k$, the following error bound holds:
\begin{equation}\label{eq_Tikerror}
\|f - f_{n,\lambda}^\delta\|_{W^{m,p}(\Omega)} \le C_1 \left(\delta + r_n\right)^{\frac{k-m}{k}}.
\end{equation}
\end{theorem}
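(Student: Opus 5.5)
Interpolate between the $L^p$ error bound of Proposition~\ref{prop_regularizationerror} and a $W^{k,p}$ bound that comes from the Sobolev embedding of Theorem~\ref{sobolevembed}. Set $e:=f-f_{n,\lambda}^\delta$. Both $f$ and $f_{n,\lambda}^\delta$ lie in $B_\sigma^\varphi$. For $f$ we use the representing measure $\rho$; for the network we use its discrete measure $\rho_{n,\lambda}^\delta$. Hence $e=N_\sigma(\rho-\rho_{n,\lambda}^\delta)$, and
\[
\|e\|_{B_{\sigma}^{\varphi}}\le c_{\sigma,\rho}^{\varphi}(f)+c_{\sigma,\rho_{n,\lambda}^{\delta}}^{\varphi}(f_{n,\lambda}^{\delta}).
\]

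\textbf{Step 1 (low-order bound).} Under the parameter choice \eqref{para_choice}, $\lambda^{1/p}\asymp\delta+r_n$. Proposition~\ref{prop_regularizationerror} then gives
\[
\|e\|_{L^p(\Omega)}\le 2\delta+r_n+\lambda^{1/p}c_{\sigma,\rho}^{\varphi}(f)\lesssim(\delta+r_n)\bigl(1+c_{\sigma,\rho}^{\varphi}(f)\bigr).
\]

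\textbf{Step 2 (top-order bound).} By the second estimate of Proposition~\ref{prop_regularizationerror},
\[
c_{\sigma,\rho_{n,\lambda}^{\delta}}^{\varphi}(f_{n,\lambda}^{\delta})\le \frac{\delta+r_n}{\lambda^{1/p}}+c_{\sigma,\rho}^{\varphi}(f)\lesssim 1+c_{\sigma,\rho}^{\varphi}(f).
\]
This bound is uniform in $\delta$ and $n$, and that uniformity is exactly what the parameter choice buys. Theorem~\ref{sobolevembed} applies to the measure $\rho-\rho_{n,\lambda}^\delta$ directly, through the pointwise derivative bound in its proof, so
\[
\|e\|_{W^{k,\infty}(\Omega)}\lesssim c_{\sigma,\rho}^{\varphi}(f)+c_{\sigma,\rho_{n,\lambda}^{\delta}}^{\varphi}(f_{n,\lambda}^{\delta})\lesssim 1+c_{\sigma,\rho}^{\varphi}(f).
\]
Since $\Omega$ is bounded, this also bounds $\|e\|_{W^{k,p}(\Omega)}$ uniformly.

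\textbf{Step 3 (interpolation).} For $0\le m\le k$, apply the Gagliardo--Nirenberg interpolation inequality on the Lipschitz domain $\Omega=(0,1)^d$:
\[
\|e\|_{W^{m,p}(\Omega)}\le C\|e\|_{L^p(\Omega)}^{1-m/k}\|e\|_{W^{k,p}(\Omega)}^{m/k}.
\]
This is the form of the inequality that holds on bounded domains, with the full $W^{k,p}$ norm rather than the seminorm on the right. Inserting Steps 1 and 2 gives
\[
\|e\|_{W^{m,p}(\Omega)}\le C_1(\delta+r_n)^{(k-m)/k},
\]
with $C_1$ depending on $c_{\sigma,\rho}^{\varphi}(f)$, $k$, $p$, $d$ and the constant in \eqref{para_choice}. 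The endpoint cases $m=0$ and $m=k$ are just Steps 1 and 2.

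\textbf{Main obstacle.} The delicate point is justifying the interpolation inequality in the form used. On a bounded domain it has to be stated with the full Sobolev norm, which is harmless here because $\|e\|_{W^{k,p}}$ is bounded anyway. For $p=1$ some versions of Gagliardo--Nirenberg have exceptional cases; these can be avoided because we actually control $W^{k,\infty}$, so one can interpolate between $L^p$ and $W^{k,\infty}$ instead. I would first cite the standard bounded-domain version (e.g.\ Adams--Fournier), obtained via an extension operator on the cube. If needed, a fallback is to iterate the elementary inequality
\[
\|D^j e\|_{L^p}\lesssim \|D^{j-1}e\|_{L^p}^{1/2}\|D^{j+1}e\|_{L^p}^{1/2}+\|D^{j-1}e\|_{L^p},
\]
proved on intervals by a one-dimensional argument and tensorized over coordinate directions.
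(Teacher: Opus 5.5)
Your proposal is correct and follows the same route as the paper. You take the $L^p$ bound from Proposition~\ref{prop_regularizationerror} under the choice \eqref{para_choice}, obtain a uniform $W^{k,p}$ bound on the error via Theorem~\ref{sobolevembed}, and interpolate between $L^p$ and $W^{k,p}$ with $\theta=m/k$. Your version is slightly more careful than the paper's: you bound the full difference $e=N_\sigma(\rho-\rho_{n,\lambda}^\delta)$ in $W^{k,p}$, whereas the paper only writes a bound for $f_{n,\lambda}^\delta$, and you track the factor $1+c_{\sigma,\rho}^{\varphi}(f)$ coming from $(\delta+r_n)/\lambda^{1/p}\lesssim 1$.
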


\begin{proof}
By Proposition \ref{prop_regularizationerror} and \eqref{para_choice}, we obtain
\begin{align*}
\|f - f_{n,\lambda}^\delta\|_{L^p(\Omega)} \le C \lambda^{\frac{1}{p}} c_{\sigma,\rho}^{\varphi}(f) \leq C (\delta + r_n ),
\end{align*}
where $\rho$ is any representing measure of $f$. Moreover, by Theorem \ref{sobolevembed}, we have
\begin{align*}
\|f_{n,\lambda}^\delta\|_{W^{m,p}(\Omega)} \le C c_{\sigma,\rho_{n,\lambda}^{\delta}}^{\varphi}(f_{n,\lambda}^{\delta}) \le C c_{\sigma,\rho}^{\varphi}(f),
\end{align*}
Then, interpolating between $L^p(\Omega)$ and $W^{k,p}(\Omega)$ with $0 \le \theta := m/k \le 1$ yields
\begin{align*}
\|f - f_{n,\lambda}^\delta\|_{W^{m,p}(\Omega)} \le C \|f - f_{n,\lambda}^\delta\|_{L^p(\Omega)}^{\frac{k-m}{k}} \|f - f_{n,\lambda}^\delta\|_{W^{k,p}(\Omega)}^{\frac{m}{k}},
\end{align*}
which gives the desired result in (\ref{eq_Tikerror}).
\end{proof}

\section{Numerical Experiments}
\label{se_numerics}
In this section, we present some numerical examples to validate the theoretical results established in the previous sections. Specifically, we investigate the numerical performance of the proposed Tikhonov regularization scheme penalized by the generalized Barron norm, with a focus on how the growth rate of the norm function $\varphi$ affects the simultaneous approximation of the target function and its higher-order derivatives. For classical results concerning regularization theory for ill-posed problems, we refer, for instance, to \cite{EHN96, LP13}.

\subsection{Experimental Setup}
\label{subsec:setup}
Throughout the simulations, we consider the domain to be the $d$-dimensional hypercube $\Omega = (0, 1)^d$, where the spatial dimension is chosen as $d=1$ or $d=5$. The target function $f(x)$ is a randomly generated Gaussian kernel-type function defined by:
\begin{equation}\label{eq_numtruesolution}
    f(x) = \sum_{j=1}^{T} \phi_j \exp\left(-\gamma_j |x \cdot l_j - \mu_j|^2\right),
\end{equation}
where the number of kernels is fixed to \(T = 10\). The scalar centers \(\mu_j \in (0,1)\), directions \(l_j \in \mathbb{S}^{d-1}\) (normalized in the \(\ell_2\)-norm), and coefficients \(\phi_j \in [-1,1]\) are generated randomly. The width parameters \(\gamma_j\) are sampled uniformly from \((0,30)\).

A training dataset consisting of $D$ sampling points $\{x_i\}_{i=1}^D$ is uniformly drawn from $\Omega$. To evaluate robustness against noise, the discrete measurements are corrupted by uniform relative noise:
\begin{equation}\label{eq_numnoise}
    f_\delta(x_i) = f(x_i) + \delta \xi_i, \quad i = 1, \dots, D,
\end{equation}
where $f(x_i)$ denotes the discrete measurement of the true solution (\ref{eq_numtruesolution}), the relative noise level is set to $\delta = 0.2$, and $\xi_i$ are the noise variables. For simplicity, we take $p=2$ and rescale $\xi$ so that $\sum_{i=1}^D\xi_i^2 = \sum_{i=1}^D f^2(x_i)$.

We employ a shallow neural network $f_n(x) = \sum_{i=1}^n a_i \sigma(w_i \cdot x + b_i)$ with an oscillatory activation function $\sigma(x) = \sin(15x)$ to capture different frequency characteristics. The network width is tested for $n = 500$ and $n = 1000$. The empirical Tikhonov functional to be minimized is defined as:
\begin{equation}\label{eq_numTikhonov}
    J_\lambda(f_n) = \frac{V_\Omega}{D} \sum_{i=1}^D |f_n(x_i) - f_\delta(x_i)|^2 + \lambda \left[ \sum_{i=1}^n |a_i| \varphi(\|w_i\|_1 + |b_i|) \right]^2,
\end{equation}
where $V_\Omega = 1$ is the volume of $\Omega$ and $f_\delta(x_i)$ is the measurement data in (\ref{eq_numnoise}). To ensure stable convergence, optimization is performed via a three-stage process: 

\begin{enumerate}
 \item Initial Adam phase: a relatively large learning rate of $0.05$ for $3500$ epochs, and the regularization parameter $\lambda$ in (\ref{eq_numTikhonov}) is set to
    \begin{align*}
        \lambda = \lambda_c \left(n^{\frac{(1-p-1/(d+1))}{p}}+\delta\right)^p
    \end{align*}
    in accordance with Theorem \ref{para_choice_thm}. Here $\lambda_c$ is a scaling parameter controlling the regularization strength. In our experiments, $\lambda_c=0.1$ throughout.

    \item Second Adam phase: a relatively small learning rate of $0.005$ for $3500$ epochs, and the regularization parameter $\lambda$ is updated to
    \begin{align*}
        \lambda = \lambda_c(n^{\frac{(1-p-1/(d+1))}{p}}+\delta)^p/\max\left(1,\sum_{i=1}^n |a_i^1|\varphi(\|w_i^1\|_1+|b_i^1|)\right)
    \end{align*}
    where $a^1_i, b^1_i, w_i^1$ are the parameters of the neural network after the above Initial Adam phase, and the maximum in the denominator is applied to avoid blow-up of the regularization parameter.
    
    \item L-BFGS phase: a final refinement phase using the L-BFGS algorithm trained for $500$ epochs. The choice of the regularization parameter is similar to that in the above Second Adam phase, but the neural network parameters are replaced by those obtained after Second Adam phase.
\end{enumerate}

We compare the least squares method, denoted by \texttt{no\_reg} throughout this section, against various generalized Barron regularizations where $\varphi(x)$ corresponds to the Taylor expansions of the exponential function with different growth rates, as well as the exponential function itself:
\begin{itemize}
    \item \textbf{Linear}: $\varphi(x) = 1 + x$;
    \item \textbf{Quadratic}: $\varphi(x) = 1 + x + \frac{x^2}{2}$;
    \item \textbf{Cubic}: $\varphi(x) = 1 + x + \frac{x^2}{2} + \frac{x^3}{6}$;
    \item \textbf{Quartic}: $\varphi(x) = 1 + x + \frac{x^2}{2} + \frac{x^3}{6} + \frac{x^4}{24}$;
    \item \textbf{Exponential}: $\varphi(x) = \exp(x)$.
\end{itemize}

\subsection{Numerical Results in One Dimension ($d=1$)}
\label{subsec:res_1d}

We first analyze the results for $d=1$ with a sampling dataset of size $D=2000$. Tables~\ref{tab:d1_n500} and \ref{tab:d1_n1000} present the mean relative errors in the Sobolev semi-norms for derivatives up to the $4$th order, for $n = 500$ and $n = 1000$, respectively.

\begin{table}[htbp]
    \centering
    \caption{Mean relative \( L^2 \) errors for \( d = 1 \) and \( n = 500 \) over 20 random realizations.}
    \label{tab:d1_n500}
    {\footnotesize
    \begin{tabular}{lccccc}
        \toprule
        Norm function $\varphi$ & Function value & 1st Derivative  & 2nd Derivative  & 3rd Derivative & 4th Derivative \\
        \midrule
        \texttt{no\_reg}    & 2.1406e-02 & 5.6171e-01 & 9.8439e+00 & 1.1715e+02 & 7.3824e+02 \\
        \texttt{linear}     & 1.9039e-02 & 1.3896e-01 & 5.0564e-01 & 1.9380e+00 & 7.0804e+00 \\
        \texttt{quadratic}  & 1.9901e-02 & 1.3667e-01 & 3.7390e-01 & 6.6932e-01 & 1.1062e+00 \\
        \texttt{cubic}      & 2.1188e-02 & 1.3191e-01 & 3.4000e-01 & 5.5959e-01 & 7.0888e-01 \\
        \texttt{quartic}    & 2.1991e-02 & 1.4115e-01 & 3.5113e-01 & 4.9925e-01 & 6.1360e-01 \\
        \texttt{exponential} &2.2625e-02 &1.4333e-01 &3.5709e-01 &5.0613e-01 &6.1716e-01\\
        \bottomrule
    \end{tabular}
    }
\end{table}

\begin{table}[htbp]
    \centering
    \caption{Mean relative \( L^2 \) errors for \( d = 1 \) and \( n = 1000 \) over 20 random realizations.}
    \label{tab:d1_n1000}
    {\footnotesize
    \begin{tabular}{lccccc}
        \toprule
        Norm function $\varphi$ & Function value & 1st Derivative  & 2nd Derivative  & 3rd Derivative & 4th Derivative \\
        \midrule
        \texttt{no\_reg}    & 2.1375e-02 & 4.7429e-01 & 7.6263e+00 & 8.5356e+01 & 5.4629e+02 \\
        \texttt{linear}     & 1.7061e-02 & 1.3449e-01 & 5.1508e-01 & 2.1823e+00 & 8.8114e+00 \\
        \texttt{quadratic}  & 1.7259e-02 & 1.2497e-01 & 3.4648e-01 & 5.7734e-01 & 8.0657e-01 \\
        \texttt{cubic}      & 1.6614e-02 & 1.2980e-01 & 3.5334e-01 & 5.3990e-01 & 7.0431e-01 \\
        \texttt{quartic}    & 1.9515e-02 & 1.3413e-01 & 3.4561e-01 & 5.1739e-01 & 6.5450e-01 \\
        \texttt{exponential} &1.9076e-02 &1.3394e-01 &3.4242e-01 &4.8985e-01 &5.9740e-01\\
        \bottomrule
    \end{tabular}
    }
\end{table}

A key insight revealed by Tables~\ref{tab:d1_n500} and \ref{tab:d1_n1000} concerns the optimization stability of neural networks under moderate relative noise. It is worth noting that for the least squares method, denoted by \texttt{no\_reg} in both tables, the high-order derivative approximations are not uniformly catastrophic; across the 20 independent random realizations, the neural network trained by the least squares method can occasionally produce acceptable profiles for high-order derivatives. However, standard training without regularization exhibits remarkably weak stability. Due to the lack of capacity control over the network parameters, the optimization trajectory is highly sensitive to the random seed and measurement noise, leading to severe approximation failures in several runs where high-order derivatives explode. This weak stability and high variance across seeds significantly inflate the statistical averages, resulting in large mean errors such as $738.24$ and $546.29$ for the $4$th derivative.

In stark contrast, introducing generalized Barron regularizers dramatically improves training stability. By monotonically increasing the growth rate of $\varphi$ from linear to exponential, the relative errors for higher-order derivatives exhibit a steady downward trend. For instance, in Table~\ref{tab:d1_n1000}, the mean error for the $4$th derivative drops from $8.8114$ (\texttt{linear}) to $0.6545$ (\texttt{quartic}). This confirms that a fast-growing $\varphi$ successfully establishes reliable structural regularization, acting as a low-pass filter that effectively suppresses noise-induced parameter variance across different optimization trials.

\subsection{Numerical Results in High Dimensions ($d=5$)}
\label{subsec:res_5d}

To evaluate the scalability and performance of our framework in higher dimensions, we extend the spatial dimension to $d=5$ and increase the number of sampling points to $D=10^5$. Tables~\ref{tab:d5_n500} and \ref{tab:d5_n1000} report the corresponding mean relative errors for $n=500$ and $n=1000$.

\begin{table}[htbp]
    \centering
    \caption{Mean relative \( L^2 \) errors for \( d = 5 \) and \( n = 500 \) over 20 random realizations.}
    \label{tab:d5_n500}
    {\footnotesize
    \begin{tabular}{lccccc}
        \toprule
        Norm function $\varphi$ & Function value & 1st Derivative  & 2nd Derivative  & 3rd Derivative & 4th Derivative \\
        \midrule
        \texttt{no\_reg}    & 3.8373e-02 & 2.6533e-01 & 1.0968e+00 & 5.3835e+00 & 2.2691e+01 \\
        \texttt{linear}     & 5.4756e-02 & 1.8311e-01 & 3.2378e-01 & 5.1021e-01 & 6.6185e-01 \\
        \texttt{quadratic}  & 7.0512e-02 & 2.3139e-01 & 4.0873e-01 & 6.2104e-01 & 7.6623e-01 \\
        \texttt{cubic}      & 7.7549e-02 & 2.5226e-01 & 4.4428e-01 & 6.5086e-01 & 7.9285e-01 \\
        \texttt{quartic}    & 8.0352e-02 & 2.6147e-01 & 4.5798e-01 & 6.6555e-01 & 8.0120e-01 \\
        \texttt{exponential} &8.1814e-02 &2.6835e-01 &4.7150e-01 &6.9000e-01 &8.3230e-01\\
        \bottomrule
    \end{tabular}
    }
\end{table}

\begin{table}[htbp]
    \centering
    \caption{Mean relative \( L^2 \) errors for \( d = 5 \) and \( n = 1000 \) over 20 random realizations.}
    \label{tab:d5_n1000}
    {\footnotesize 
    \begin{tabular}{lccccc}
        \toprule
        Norm function $\varphi$ & Function value & 1st Derivative  & 2nd Derivative  & 3rd Derivative & 4th Derivative \\
        \midrule
        \texttt{no\_reg}    & 4.3334e-02 &3.0633e-01 &1.0579e+00 &4.2979e+00 &1.2179e+01 \\
        \texttt{linear}     & 4.4707e-02 &1.5143e-01 &2.7660e-01 &4.6472e-01 &6.5023e-01 \\
        \texttt{quadratic}  & 6.5382e-02 &2.2018e-01 &3.9095e-01 &5.9114e-01 &7.4064e-01 \\
        \texttt{cubic}    & 7.2827e-02 &2.4162e-01 &4.3230e-01 &6.3729e-01 &7.8154e-01 \\
        \texttt{quartic}  & 7.8180e-02 &2.6036e-01 &4.6054e-01 &6.7328e-01 &8.1585e-01 \\
        \texttt{exponential} &8.0978e-02 &2.7017e-01 &4.7792e-01 &6.8681e-01 &8.3012e-01\\
        \bottomrule
    \end{tabular}
    }
\end{table}

As shown in Tables~\ref{tab:d5_n500} and \ref{tab:d5_n1000}, the instability of the least squares method persists in higher dimensions. While the unregularized network can capture function properties reasonably well, its variance across random seeds heavily affects the high-order derivatives, leading to accumulated mean errors (e.g., $22.691$ for the $4$th derivative). The introduction of our generalized Barron regularizers provides crucial stabilization, keeping all derivative errors within a stable tight bound ($< 0.84$) across all seeds.

Interestingly, an intriguing trade-off distinct from the 1D case emerges in the 5D setting: the relative errors across all derivative orders increase slightly as the growth rate of $\varphi$ moves from linear to quartic. For example, in Table~\ref{tab:d5_n1000}, the $4$th derivative error changes from $0.65$ (\texttt{linear}) to $0.83$ (\texttt{exponential}). This behavior reveals that under high dimension, the function landscape may become exceptionally complex. A fast-growing norm function such as \texttt{exponential} may impose overly rigid parameter bounds, which somewhat limits the immediate expressive capacity of a finite-width network. Consequently, while a smoother norm function provides reliable defense against derivative divergence, it induces a minor underfitting effect. This suggests that for high-dimensional problems, a milder growth rate (e.g., \texttt{linear} or \texttt{quadratic}) of the norm function, or a significantly larger network width combined with more training epochs, is preferable to optimize the trade-off between smoothing stabilization and expressive power.

\section{Conclusion}\label{se_conclusion}
In this work, we proposed a novel family of generalized Barron spaces \(B_\sigma^\varphi\) for shallow neural networks by coupling each activation \(\sigma\) with a norm function \(\varphi\). This generalization remedies limitations of previous formulations, which may degenerate for non-homogeneous \(\sigma\), by producing a well-defined Banach space for broad classes of activation functions. We proved that, under appropriate conditions on \(\varphi\) and \(\sigma\), the space \(B_\sigma^\varphi\) continuously embeds into \(W^{k,\infty}(\Omega)\), meaning that the \(\varphi\)-weighted norm controls derivatives up to order \(k\). Thus, our results link parametric complexity, measured by \(\|f\|_{B_\sigma^\varphi}\), to the smoothness of the realized function.
    
From an approximation perspective, we obtained dimension-free error bounds for approximating functions in \(B_\sigma^\varphi\) by finite-width neural networks. A Monte Carlo sampling argument yields the standard \(O(n^{1/p-1})\) rate in type-\(p\) target spaces. Under stronger assumptions on \(\varphi\) and \(\sigma\), we derived faster rates that recover and extend known results for classical and spectral Barron spaces.
    
Finally, we showed that the explicit \(\varphi\)-weighted representation cost offers a natural regularization tool: it can be computed from network parameters and controls the Sobolev norm of the output. Using this cost as a Tikhonov penalty therefore suggests regularization schemes tailored to the activation, beyond generic Sobolev norm penalties.
    
In summary, our generalized Barron space framework achieves finer-grained control and understanding of shallow neural networks: it unifies known theories, establishes novel embedding and approximation results, and provides practical insights for network design. In future work, it will be important to empirically validate these findings in more complex settings. For example, one could perform numerical experiments on forward and inverse problems for PDEs to confirm the predicted approximation rates and to compare $\varphi$-weighted norm regularization against standard methods. It would also be interesting to automate the choice of the norm function $\varphi$ via a data-driven approach and to extend the $\varphi$-framework to deep or residual network architectures. Finally, we anticipate that analyzing optimization dynamics, e.g. gradient flow, in relation to the induced $\varphi$-weighted norm could yield further insights into learning behavior under network-specific norms.

\appendix    
\section{Proof of Lemma \ref{kernel_prop}}\label{appendix_B}
    \begin{proof}[Proof of Lemma \ref{kernel_prop}]
For a measurable rectangle \(A\times B\), we first define
\[
\eta(A\times B):=\int_A K(x,B)\,\mu(\mathrm{d}x).
\]
The assumption \(\int_X|K|(x,Y)\,\mathrm{d}|\mu|(x)<\infty\) implies that this set function extends uniquely to a finite signed complex measure \(\eta\) on \((X\times Y,\mathscr{A}\otimes\mathscr{B})\). Its total variation satisfies
\[
|\eta|(A\times B)\le \int_A |K|(x,B)\,\mathrm{d}|\mu|(x).
\]
Define \(\nu\) as the \(Y\)-marginal of \(\eta\), namely \(\nu(B):=\eta(X\times B)\). Then \(\nu\) is a finite signed complex measure and
\[
|\nu|(B)\le |\eta|(X\times B)
\le \int_X |K|(x,B)\,\mathrm{d}|\mu|(x).
\]

For a measurable function \(f:Y\to\mathbb{C}\) satisfying the integrability assumption in the lemma, the function \((x,y)\mapsto f(y)\) belongs to \(L^1(|\eta|)\). Fubini's theorem for finite signed complex measures therefore gives
\[
\int_Y f(y)\,\nu(\mathrm{d}y)
=\int_{X\times Y}f(y)\,\eta(\mathrm{d}x,\mathrm{d}y)
=\int_X\left(\int_Yf(y)\,K(x,\mathrm{d}y)\right)\mu(\mathrm{d}x).
\]
The same theorem also yields the asserted measurability and integrability of the inner integral.
    \end{proof}

\section{Type-$\min\{2,p\}$ spaces}\label{se_appendix}
    \begin{proposition}\label{sobolev_is_typep}
       For \( k \in \mathbb{N} \) and \( p \in (1,\infty) \), the Sobolev space $W^{k, p}(\Omega)$ is type-$\min\{2,p\}$.
    \end{proposition}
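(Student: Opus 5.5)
The plan is to reduce the statement to the type of a scalar $L^p$ space, using the standard isometric embedding of $W^{k,p}(\Omega)$ into a finite product of $L^p$ spaces. Set $q:=\min\{2,p\}$ and let $N$ be the number of multi-indices $\alpha$ with $|\alpha|\le k$. Equip $W^{k,p}(\Omega)$ with the norm
\[
\|u\|_{W^{k,p}}^p=\sum_{|\alpha|\le k}\|\partial^\alpha u\|_{L^p(\Omega)}^p .
\]
The map $J:u\mapsto(\partial^\alpha u)_{|\alpha|\le k}$ is then a linear isometry into $L^p(\Omega\times\{1,\dots,N\})$, where the second factor carries counting measure. Type is inherited by closed subspaces with the same constant, and passing to an equivalent norm only changes $T_q$ by a multiplicative constant. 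It therefore suffices to show that $L^p(S,\mu)$ has type $q$ for an arbitrary measure space $(S,\mu)$.

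For $L^p$ itself, I would follow the same route as in \cite{Xu_sharpbound} for $H^k$, treating the two cases $p\ge2$ and $1<p\le2$ separately. Fix $x_1,\dots,x_n\in L^p$. Fubini's theorem gives
\[
\mathbb E\Bigl\|\sum_i\varepsilon_ix_i\Bigr\|_{L^p}^p=\int_S\mathbb E\Bigl|\sum_i\varepsilon_ix_i(s)\Bigr|^p\,d\mu(s).
\]
By the scalar Khintchine inequality, the integrand is bounded by $B_p^p\bigl(\sum_i|x_i(s)|^2\bigr)^{p/2}$.

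\emph{Case $1<p\le2$.} Here $q=p$. Since $\ell^p\subset\ell^2$ with $\|\cdot\|_{\ell^2}\le\|\cdot\|_{\ell^p}$, we have $\bigl(\sum_i|x_i(s)|^2\bigr)^{p/2}\le\sum_i|x_i(s)|^p$. Integrating in $s$ gives
\[
\mathbb E\Bigl\|\sum_i\varepsilon_ix_i\Bigr\|^p\le B_p^p\sum_i\|x_i\|_{L^p}^p,
\]
which is exactly \eqref{eq_typep} with exponent $p$.

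\emph{Case $p\ge2$.} Here $q=2$, and the task is to bound $\mathbb E\|\sum_i\varepsilon_ix_i\|^2$. First apply Jensen's inequality ($2\le p$) to get
\[
\mathbb E\Bigl\|\sum_i\varepsilon_ix_i\Bigr\|^2\le\Bigl(\mathbb E\Bigl\|\sum_i\varepsilon_ix_i\Bigr\|^p\Bigr)^{2/p}.
\]
Combined with the Khintchine bound above, this yields
\[
\mathbb E\Bigl\|\sum_i\varepsilon_ix_i\Bigr\|^2\le B_p^2\,\Bigl\|\Bigl(\sum_i|x_i|^2\Bigr)^{1/2}\Bigr\|_{L^p}^2=B_p^2\,\Bigl\|\sum_i|x_i|^2\Bigr\|_{L^{p/2}} .
\]
Because $p/2\ge1$, the triangle inequality in $L^{p/2}$ gives $\|\sum_i|x_i|^2\|_{L^{p/2}}\le\sum_i\||x_i|^2\|_{L^{p/2}}=\sum_i\|x_i\|_{L^p}^2$. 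This proves type $2$.

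The main point needing care is the case $p\ge2$. There one must move between the $p$-th and second moments (via Jensen, or the Kahane inequality if one prefers a two-sided statement) and use Minkowski's inequality in $L^{p/2}$ rather than a pointwise $\ell^p$ comparison; everything else is routine bookkeeping. In the final step, the constant $T_q(W^{k,p})$ depends only on $p$ and on the norm-equivalence constant between the chosen norm and the usual $W^{k,p}$ norm, which in turn depends only on $N$, i.e.\ on $k$ and $d$. In particular it is independent of $n$ and of the vectors $x_i$.
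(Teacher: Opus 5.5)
Your proof is correct and follows essentially the same route as the paper: Fubini and the scalar Khintchine inequality pointwise, then the comparison $(\sum_j|a_j|^2)^{p/2}\le\sum_j|a_j|^p$ when $p\le 2$, and Jensen together with Minkowski in $L^{p/2}$ when $p\ge 2$. The only difference is packaging. Embedding $W^{k,p}(\Omega)$ isometrically into $L^p(\Omega\times\{1,\dots,N\})$ absorbs the sum over multi-indices into the measure space, which avoids the extra constant (the number of multi-indices) that the paper incurs in the case $p\ge 2$ when it bounds each $\|\partial^\alpha f_j\|_{L^p}$ by the full $W^{k,p}$ norm.
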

    \begin{proof}
        Let $f_1,\dots,f_n\in W^{k,p}$ and $\varepsilon_1,\dots,\varepsilon_n$ be random variables obeying Rademacher distribution. Then, there holds
        \begin{align*}
    \mathbb{E}\left \Vert\sum_{j=1}^n \varepsilon_jf_j \right\Vert_{W^{k,p}}^p &=\mathbb{E}\left(\sum_{\vert \alpha\vert\le k}\int_{\Omega}\left \vert\sum_{j=1}^n\varepsilon_j\partial^\alpha f_j(x) \right \vert^p \mathrm{d}x \right)\\
    &= \sum_{\vert \alpha\vert\le k}\int_{\Omega}\mathbb{E}\left(\left \vert\sum_{j=1}^n\varepsilon_j\partial^\alpha f_j(x) \right \vert^p \right)\mathrm{d}x\\
    &\le b_p^p\sum_{\vert \alpha\vert \le k}\int_{\Omega} \left(\sum_j\vert \partial^\alpha f_j\vert^2 \right)^{\frac{p}{2}}\mathrm{d}x,
\end{align*}
    and the last inequality is derived from the Khintchine inequality, in which $b_p$ is a constant in the Khintchine inequality that depends on $p$.

    For the case $p\ge 2$,
\begin{align*}
    \sum_{\vert \alpha\vert \le k}\int_{\Omega} \left(\sum_j\vert \partial^\alpha f_j\vert^2 \right)^{\frac{p}{2}} &= \sum_{\vert \alpha\vert \le k} \left \Vert\sum_j\vert \partial^\alpha f_j\vert^2
     \right \Vert_{L^{\frac{p}{2}}(\Omega)}^{\frac{p}{2}}\\
     &\le\sum_{\vert \alpha\vert\le k}\left(\sum_j\left \Vert(\partial^\alpha f_j)^2 \right \Vert_{L^{\frac{p}{2}}(\Omega)} \right)^{\frac{p}{2}}\\
     & =\sum_{\vert \alpha\vert\le k}\left\{\sum_j\left[\int_{\Omega}\vert\partial^\alpha f_j\vert^p \right]^{\frac{2}{p}} \right\}^{\frac{p}{2}}\\
     & \le \sum_{\vert \alpha\vert\le k}\left\{\sum_j\left[\sum_{\vert \alpha^{'}\vert\le k}\int_{\Omega}\vert\partial^{\alpha^{'}} f_j\vert^p \right]^{\frac{2}{p}} \right\}^{\frac{p}{2}}\\
     &=d^k\left(\sum_j\left \Vert f_j\right \Vert_{W^{k,p}}^{2} \right)^{\frac{p}{2}}.
\end{align*}
    Therefore,
    \begin{equation*}
        \left (\mathbb{E}\left \Vert\sum_{j}\varepsilon_jf_j \right\Vert_{W^{k,p}}^p\right )^{\frac{2}{p}}\le C\sum_j \Vert f_j\Vert_{W^{k,p}}^2.
    \end{equation*}
    Because $p\ge 2$, by H\"{o}lder inequality 
    \begin{equation*}
        \left (\mathbb{E}\left \Vert\sum_{j}\varepsilon_jf_j \right\Vert_{W^{k,p}}^2\right )^{\frac{1}{2}}\le\left (\mathbb{E}\left \Vert\sum_{j}\varepsilon_jf_j \right\Vert_{W^{k,p}}^p\right )^{\frac{1}{p}},
    \end{equation*}
    hence
    \begin{equation*}
        \mathbb{E}\left \Vert\sum_{j}\varepsilon_jf_j \right\Vert_{W^{k,p}}^2\le C\sum_j \Vert f_j\Vert_{W^{k,p}}^2.
    \end{equation*}
    For the case $p<2$, by the Jensen's inequality, we derive 
    \begin{align*}
        \left(\sum_j\vert \partial^\alpha f_j\vert^2 \right)^{\frac{p}{2}}\le \sum_j\left( \vert \partial^\alpha f_j\vert^2\right)^{\frac{p}{2}}=\sum_j\vert \partial^\alpha f_j\vert^{p},
    \end{align*}
    hence,
    \begin{align*}
        \sum_{\vert \alpha\vert \le k}\int_{\Omega} \left(\sum_j\vert \partial^\alpha f_j\vert^2 \right)^{\frac{p}{2}} &\le \sum_{\vert \alpha\vert \le k}\int_{\Omega} \sum_j\vert \partial^\alpha f_j\vert^p\\
        &=\sum_j\Vert f_j\Vert_{W^{k,p}}^p.
    \end{align*}
    Therefore,
    \begin{align*}
        \mathbb{E}\left \Vert\sum_{j}\varepsilon_jf_j \right\Vert_{W^{k,p}}^p\le C\sum_j \Vert f_j\Vert_{W^{k,p}}^p.
    \end{align*}
    This completes the proof that $W^{k,p}(\Omega)$ is type-$\min\{2,p\}$.
    \end{proof}

\section*{Acknowledgements}
This work was supported by National Key Research and Development Programs of China (No. 2023YFA1009103), NSFC (No. 92570106) and Science and Technology Commission of Shanghai Municipality (No. 23JC1400501). 

\section*{Data Availability Statement}
Data sharing not applicable to this article as no datasets were generated or analysed during the current study.

\section*{Conflict of Interest}
The authors have no conflicts of interest to declare that are relevant to the content of this article.

\normalem
\bibliographystyle{plain}
\bibliography{references}
\end{document}